\theoremstyle{plain}
\newtheorem{maintheorem}{Theorem}
\newtheorem{teo}{Theorem}[section]
\newtheorem{lemma}[teo]{Lemma}
\newtheorem{remark}[teo]{Remark}
\newtheorem{prop}[teo]{Proposition}
\newtheorem{cor}[teo]{Corolary}
\newtheorem{ex}{Example}
\newtheorem{defi}[teo]{Definition}
\newcommand{\RR}{{\mathbb R}}
\newcommand{\R}{{\mathbb R}}
\newcommand{\N}{\mathbb{N}}
\newcommand{\MF}{\mathfrak{M}(\Phi)}
\newcommand{\Mft}{\mathfrak{M}(\phi^t)}
\newcommand{\Mfone}{\mathfrak{M}(\phi^1)}
\newcommand{\MS}{\mathfrak{M}(S)}
\newcommand{\EF}{\mathfrak{M}_{\mathrm erg}(\Phi)}
\newcommand{\U}{\mathcal{U}}
\newcommand{\V}{\mathcal{V}}
\newcommand{\C}{\mathcal{C}}
\newcommand{\Q}{\mathbb{Q}}
\renewcommand{\epsilon}{\varepsilon}
\newcommand{\Gen}{\lim_{n\rightarrow\infty}\frac{1}{n}\sum_{j=0}^{n-1}\varphi(f^j(x))=\int\varphi{d\mu}}
\newcommand{\OB}{\varphi:X\rightarrow \mathbb{R}}
\newcommand{\supp}{\operatorname{supp}}
\newcommand{\GenF}{\lim_{T\rightarrow\infty}\frac{1}{T}\int_0^T\varphi(\phi^t(x))dt=\int\varphi{d\mu}}
\def \RR {{\mathbb R}}
\def \SS {{\mathbb S}}
\def \cP {{\mathcal P}}
\def \cU {{\mathcal U}}
\def \fR {{\mathfrak R}}
\def \fM {{\mathfrak M}}
\title{On Bowen's entropy inequality and almost specification for flows}
\author{ Maria Jos\'e Pacifico and Diego Sanhueza}
\thanks{This research has been supported [in part] by CAPES -- Finance Code 001 and by CAPES- and CNPq-grants. MJP was partially supported by FAPERJ}
\date{}
\begin{document}
\maketitle

\begin{abstract}

{We study the Bowen topological entropy of generic  and  irregular points for certain dynamical systems. We define the topological entropy of noncompact sets for flows, analogous to Bowen's definition. We show that this entropy coincides with the Bowen topological entropy of the time-1 map on any set. We also show a Bowen's inequality for flows; namely, that the metric entropy with respect to every invariant measure for a continuous flow is an upper bound for the topological entropy of the set of generic points with respect to the same measure, and the equality is always true if the measure is ergodic. We propose a definition of almost specification property for flows and prove that a continuous flow has the almost specification property if  the time-1 map satisfies this property. Using Bowen's inequality for flows, we show that every continuous flow with the almost specification property is saturated, extending a result of Meson and Vericat in \cite{Meson-Vericat}. Under the same hypotheses, we extend a result of  Thompson on the entropy of irregular points in \cite{Thompson(2012)}.}
\end{abstract}

\section{Introduction}
A central role in ergodic theory is played by Birkhoff averages, a local property which indicates the asymptotic mean of observations along trajectories.
Given  a discrete dynamical system  $(X,f)$, a point $x\in X$ is  {\it generic} with respect to an invariant measure $\mu$ if
$$
\lim_{n\rightarrow\infty}\frac{1}{n}\sum_{j=0}^{n-1}\varphi(f^j(x))=\int\varphi{d\mu}
$$
for all {continuous} observable $\varphi:X\rightarrow\mathbb{R}$. This relation implies that the orbit $\{f^{n}(x):x\in\mathbb{N}\,\,\mbox{ (or}\,\, \mathbb{Z}\mbox{)}\}$ of a 
{$\mu$}-generic point $x$ is uniformly distributed on $X$ {with respect to $\mu$}.
Analogously, a point $x\in X$ is  {\it generic} with respect to an invariant measure $\mu$ for a continuous dynamical system $(X,\Phi=\{\phi^t\}_{t\in\mathbb{R}})$ if
$$
\lim_{T\rightarrow\infty}\frac{1}{T}\int_{0}^{T}\varphi(\phi^t(x)) dt=\int\varphi{d\mu}
$$
for all {continuous} observable $\varphi:X\rightarrow\mathbb{R}$.
Generic points give relevant information about the observable properties of the dynamics, and generic points for different measures {allow} to obtain  complementary data.
It turns out that, from  Birkhoff's Ergodic Theorem, the set of generic points with respect to an ergodic measure $\mu$, denoted $G_\mu(f)$
when $f$ is a discrete dynamic system (and $G_\mu(\Phi)$ for a continuous flow), has {$\mu$-}full measure, and thus reflects the global behavior of the system.
In contrast, if $\mu$ is non-ergodic then generic points form a set of zero measure (can be even empty). However, in several cases, this set can be large from other points of view (for instance, it is dense in the support of $\mu$ when the system has the asymptotic average shadowing property {\cite{Dong-Tian-Yuan(2015)}}). The results in \cite{Colebrook,Eggleston}, which relate the Hausdorff dimension of generic points of an invariant measure and its metric entropy, evidence the importance  { of size} the sets $G_\mu(f)$ and $G_\mu(\Phi)$ from the {dimension} point of view.
{In}  \cite{Bowen(1973)}, Bowen defined the entropy $h(f,Z)$ of a discrete dynamical system $(X,f)$ along with any subset $Z$, {not necessarily compact nor invariant}, of the compact metric space $X$. Topological entropy for noncompact sets has been the focus of many studies since the pioneering work of Bowen.
It  plays a crucial role in many aspects of the ergodic theory and dynamical systems, especially  with the dimension theory and multifractal analysis (see \cite{Barreira2} and references therein). This notion, which resembles the definition of the Hausdorff dimension, is today known as {\textit{Bowen Topological Entropy}}.
{It was proved in \cite[Theorems 2 and 3]{Bowen(1973)}} a remarkable result which says that the metric entropy {with} respect to an invariant measure $\mu$ is an upper bound for the entropy of $G_\mu(f)$; that is, for  any $\mu\in \mathfrak{M}(f)$ (where $\mathfrak{M}(f)$ stands for the set of all the f-invariant measures) it holds
\begin{equation}\label{desBowenint}
h(f,G_\mu(f))\leq h_\mu(f), %\,\, \mbox{for any }\mu\in\mathfrak{M}(f),
\end{equation}
and the equality is always true if $\mu$ is ergodic. 
%{Here $\mathfrak{M}(f)$ is the set of all $f$-invariant measures.}
The inequality  (\ref{desBowenint}) is strict if the measure is not ergodic. Indeed it is not difficult {to} construct examples
of dynamical systems with positive metric entropy with respect to an invariant measure $\mu$ and $G_\mu(f)=\emptyset$.
But, surprisingly, for many important dynamical systems, the equality is valid for all invariant measures. Systems for which the equality  holds  for all invariant measures are called {\it saturated}.
At  \cite{Fan-Liao-Peyriere}, Fan, Liao and Peyri\`ere proved that systems with the specification are saturated.
Pfister and Sullivan at \cite{Pfister-Sullivan(2007)} introduce the notion of $g$-almost product (which is  { satisfied} for all the $\beta-$shifts) and show that they are saturated, generalizing the result of Fan \textit{et al}.
In the sequel  A. Mes\'on and F. Vericat, \cite{Meson-Vericat}, prove that systems with the property of almost specification are also saturated. 
Recall that specification implies the $g$-almost product property, and {this latter} implies almost specification.

\indent In contrast, not much is known about these questions in the setting of continuous flows. We are { interested in proving, on compact metric spaces, the inequality
(\ref{desBowenint}) for continuous flows} and  knowing when a continuous flow is saturated.
To do so, we extend the definition of topological entropy to flows and prove an Abramov-type formula:
\begin{maintheorem}\label{teoA}
Let $\Phi=\{\phi^t\}_{t\in\mathbb{R}}$ be a continuous flow on a compact metric space $X$.~Then
$$
t\cdot h(\Phi,Y)=h(\phi^t,Y)\qquad \mbox{for any }\, Y \subseteq X\mbox{ and }t\geq0.
$$
\end{maintheorem}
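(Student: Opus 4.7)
The plan is to reduce to the case $t > 0$ and then to compare the Bowen balls of the flow $\Phi$ with those of the time-$t$ map $\phi^t$, relying on uniform continuity of $\Phi$ on the compact set $X \times [0,t]$. The case $t<0$ will be handled analogously after observing that $\phi^t = (\phi^{-t})^{-1}$ and that Bowen's dimension-like entropy along a set is invariant under inversion of a homeomorphism of a compact metric space.

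First I would set up two ball inclusions. Writing $B_T^\Phi(x,\rho) = \{y : d(\phi^s(x),\phi^s(y)) < \rho \text{ for every } s \in [0,T]\}$ and $B_n^{\phi^t}(x,\rho) = \{y : d(\phi^{kt}(x),\phi^{kt}(y)) < \rho \text{ for } k=0,\dots,n-1\}$, the inclusion $B_{nt}^\Phi(x,\ep) \subseteq B_n^{\phi^t}(x,\ep)$ is immediate. In the opposite direction, for each $\ep > 0$ uniform continuity of the flow on $X \times [0,t]$ provides a $\delta = \delta(\ep,t) > 0$ with $d(z,w) < \delta \Rightarrow d(\phi^s(z),\phi^s(w)) < \ep$ for every $s \in [0,t]$; combined with the semigroup property of $\Phi$, this yields $B_n^{\phi^t}(x,\delta) \subseteq B_{nt}^\Phi(x,\ep)$.

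Second, I would translate each ball inclusion into an entropy inequality through the Carath\'eodory-type outer measure underlying the definition of Bowen entropy. To obtain $h(\phi^t,Y) \le t\, h(\Phi,Y)$, I take any cover of $Y$ by flow balls $B_{T_i}^\Phi(x_i,\ep)$ with $T_i \ge N$, set $n_i = \lfloor T_i/t \rfloor$, and use the easy inclusion to produce a $\phi^t$ cover $\{B_{n_i}^{\phi^t}(x_i,\ep)\}$; since $n_i t \ge T_i - t$, the weights compare as $e^{-(ts)n_i} \le e^{ts}\, e^{-s T_i}$, and passage through the Carath\'eodory infimum and the critical exponent yields the inequality as $\ep \to 0$. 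Conversely, covers $\{B_{n_i}^{\phi^t}(x_i,\delta)\}$ with $n_i \ge N$ produce, via the uniform-continuity inclusion, covers $\{B_{n_i t}^\Phi(x_i,\ep)\}$ whose weights match exactly, $e^{-s(n_i t)} = e^{-(ts)n_i}$; this delivers $h(\Phi,Y) \le \tfrac{1}{t} h(\phi^t,Y)$ at scale $\ep$, and since $\delta \to 0$ as $\ep \to 0$ the inequality survives in the limit.

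The main obstacle I anticipate is bookkeeping rather than conceptual: matching the continuous-time parameter $T_i \in \RR_{\ge 0}$ with the integer-valued $n_i$, tracking the exponential weights $s$ versus $ts$, and verifying that the passages through the thresholds of the two Carath\'eodory constructions are consistent (in particular, that the pre-limits $m^\Phi(Y,s,\ep)$ and $m^{\phi^t}(Y,ts,\ep)$ compare as claimed uniformly in $N$). All the substance rests on the uniform-continuity step, available precisely because $X$ is compact and $\Phi$ is continuous; without compactness, $\delta$ could not be chosen independently of $x$.
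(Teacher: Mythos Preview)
Your proposal is correct and follows essentially the same strategy as the paper: one inequality is immediate because discrete-time orbit separation is controlled by continuous-time orbit separation, and the reverse inequality comes from uniform continuity of the flow on $X\times[0,\tau]$, which lets a $\delta$-scale discrete cover produce an $\varepsilon$-scale flow cover. The only cosmetic difference is that you phrase everything in terms of Bowen balls and the Pesin--Pitskel weights $e^{-sT_i}$, whereas the paper works with the open-cover formulation (sets $B$ with $N(B)=\sup\{T:\phi^t(B)\prec\mathcal U\ \forall\,0\le t<T\}$ and a Lebesgue-number argument); these are equivalent presentations of the same Carath\'eodory construction, and the substantive step---uniform continuity on a compact time interval---is identical in both.
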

Then, we use the above result to prove our main result  which is  { analogous} to Bowen's inequality for flows:

\begin{maintheorem}\label{teoB}Let $\Phi=\{\phi^t\}_{t\in\mathbb{R}}$ be a continuous flow on a compact metric space $X$.~Then
\begin{equation}\label{BowenIneqFlow}
h(\Phi, G_\mu(\Phi))\leq h_\mu(\Phi)
\end{equation}
for { every} $\Phi-$invariant measure $\mu$, and the equality is always true if $\mu$ is ergodic.
\end{maintheorem}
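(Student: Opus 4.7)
The strategy is to use Theorem A as a bridge between the flow and its time-$1$ map $\phi^1$, together with the classical Abramov identity $h_\mu(\Phi)=h_\mu(\phi^1)$. The upper bound in \eqref{BowenIneqFlow} will be reduced to a discrete-time statement about $\phi^1$, whereas the matching lower bound, valid when $\mu$ is ergodic, will be obtained from Birkhoff's ergodic theorem for flows and a Brin--Katok-type local entropy formula applied directly to $\Phi$.

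\textbf{Upper bound.} By Theorem A, $h(\Phi, G_\mu(\Phi))=h(\phi^1, G_\mu(\Phi))$, so the upper bound is equivalent to
\begin{equation*}
h(\phi^1, G_\mu(\Phi))\le h_\mu(\phi^1).
\end{equation*}
The delicacy is that $G_\mu(\Phi)$ is in general strictly larger than $G_\mu(\phi^1)$, so one cannot cite Bowen's original inequality for $\phi^1$ verbatim; instead I would use the following standard strengthening, whose proof is a minor adaptation of Bowen's original argument: if $K\subseteq X$ has the property that, for every $x\in K$, every weak-$*$ accumulation point $\nu$ of the empirical measures $\mu_n^x:=\tfrac{1}{n}\sum_{j=0}^{n-1}\delta_{\phi^j(x)}$ satisfies $h_\nu(\phi^1)\le\alpha$, then $h(\phi^1,K)\le\alpha$. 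To verify this hypothesis for $K=G_\mu(\Phi)$ with $\alpha=h_\mu(\phi^1)$, note that for any continuous $\varphi\colon X\to\RR$, the function $\tilde\varphi(y):=\int_0^1\varphi(\phi^s y)\,ds$ is also continuous and, for $x\in G_\mu(\Phi)$,
\begin{equation*}
\frac{1}{n}\sum_{j=0}^{n-1}\tilde\varphi(\phi^j x)=\frac{1}{n}\int_0^n\varphi(\phi^s x)\,ds\longrightarrow \int\varphi\,d\mu.
\end{equation*}
Consequently, if $\mu^x_{n_k}\rightharpoonup\nu$ along a subsequence, then $\int\tilde\varphi\,d\nu=\int\varphi\,d\mu$ for every continuous $\varphi$, i.e.\ $\mu=\int_0^1(\phi^s)_*\nu\,ds$. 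Since $\phi^s$ commutes with $\phi^1$, each $(\phi^s)_*\nu$ has the same $\phi^1$-entropy as $\nu$, and the affineness of $\mu\mapsto h_\mu(\phi^1)$ gives $h_\nu(\phi^1)=h_\mu(\phi^1)$, which is what is required.

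\textbf{Lower bound for ergodic $\mu$.} Birkhoff's ergodic theorem for flows yields $\mu(G_\mu(\Phi))=1$, so it suffices to show $h(\Phi,Z)\ge h_\mu(\Phi)$ for every Borel set $Z$ with $\mu(Z)=1$. This follows from the Brin--Katok local entropy formula expressed in terms of flow Bowen balls $B^\Phi(x,\epsilon,T):=\{y:d(\phi^ty,\phi^tx)<\epsilon\text{ for all }t\in[0,T]\}$: for $\mu$-a.e.\ $x$,
\begin{equation*}
h_\mu(\Phi)=\lim_{\epsilon\to 0}\limsup_{T\to\infty}-\tfrac{1}{T}\log\mu\bigl(B^\Phi(x,\epsilon,T)\bigr),
\end{equation*}
combined with a routine covering argument of the same type as in the discrete case. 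Working with the flow directly sidesteps the subtlety that an ergodic flow measure need not be ergodic for $\phi^1$.

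The main obstacle I anticipate is the rigorous formulation and verification of the strengthened Bowen inequality used in the upper bound, since Bowen's published result is stated only for $K=G_\mu(\phi^1)$. Although the partition-and-covering estimates of his original argument adapt almost verbatim, one must carefully track how the assumption on all accumulation points (rather than on a single limit) enters the quantitative control of $\mu\bigl(B_n(x,\epsilon)\bigr)$. The averaging identity $\mu=\int_0^1(\phi^s)_*\nu\,ds$ is a short computation and the lower bound is a standard Brin--Katok argument, so neither of these should present serious additional difficulty once Theorem A is in hand.
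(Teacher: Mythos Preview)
Your upper-bound argument is essentially the paper's proof. The paper defines $QR(\alpha)=\{x:\exists\,\nu\in V_{\phi^1}(x)\text{ with }h_\nu(\phi^1)\le\alpha\}$, shows $G_\mu(\Phi)\subseteq QR(h_\mu(\phi^1))$ via the same averaging identity $\mu=\int_0^1(\phi^s)_*\nu\,ds$ that you derive, and then invokes \cite[Theorem~2]{Bowen18}, which states precisely $h(f,QR(\alpha))\le\alpha$. What you call the ``strengthened Bowen inequality'' is not a strengthening at all but an immediate corollary of that theorem: your hypothesis (\emph{every} accumulation point has entropy $\le\alpha$) is stronger than Bowen's (\emph{some} accumulation point does), so your set $K$ is contained in $QR(\alpha)$ and the bound follows by monotonicity of $h(\phi^1,\cdot)$. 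Your anticipated ``main obstacle'' therefore evaporates---Bowen's published result already covers exactly what you need, and the paper simply cites it. For the entropy comparison, the paper uses only Jensen's inequality (its Lemma~\ref{LemmaEntropy}) to obtain $h_\nu(\phi^1)\le h_{\overline\nu}(\phi^1)=h_\mu(\phi^1)$, which suffices; your appeal to full affineness under the integral decomposition is correct but unnecessary.

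For the lower bound the paper is more economical: once $\mu(G_\mu(\Phi))=1$, \cite[Theorem~1]{Bowen18} gives $h_\mu(\phi^1)\le h(\phi^1,G_\mu(\Phi))$ directly, and Theorem~\ref{teoA} converts this to the flow statement. Your Brin--Katok route works in principle but adds a layer that Theorem~\ref{teoA} makes redundant. Conversely, your argument is cleaner than the paper's in one respect: by handling arbitrary weak-$*$ accumulation points you bypass the paper's Theorem~\ref{G=g} (the fixed-point argument proving $Q(\Phi)=Q(\phi^1)$), which the paper uses to produce a \emph{unique} $\mu_x$ with $x\in G_{\mu_x}(\phi^1)$ but which is not actually needed once one is citing Bowen's Theorem~2 rather than his Theorem~3.
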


{Theorem \ref{teoB} gives a version for flows of the inequality (\ref{desBowenint})
and  generalizes the main result in \cite{Wang-Chen-Lin-Wu}.}
Our approach is {based on} comparing the generic points for the flow with those of the time-1 map (see Theorem \ref{teoC} and \ref{conten}).
Note that this task is not evident { because an ergodic measure for the flow may be not ergodic for the time-1 map.}
This implies that generic points for the flow with respect to $\mu$ may not be  a generic points for the time-1 map with respect to $\mu$.
Theorem \ref{teoB}, gives a sufficient condition for a flow to be saturated in terms of its time-one map.

\begin{maintheorem}\label{teoC} Let $\Phi=\{\phi^t\}_{t\in\mathbb{R}}$ be a continuous flow on a compact metric space $X$. If $\phi^t$ is saturated, for some $t> 0$, then $\Phi$ is saturated.
\end{maintheorem}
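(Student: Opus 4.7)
The plan is to chain together the tools already established in the paper: saturation of $\phi^1$, Theorem A, Theorem B, and Abramov's identity $h_\mu(\phi^1) = h_\mu(\Phi)$. Fix any $\mu \in \MF$. Theorem B already gives $h(\Phi, G_\mu(\Phi)) \leq h_\mu(\Phi)$, so the job is to prove the reverse inequality.

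Since $\mu$ is $\Phi$-invariant it is in particular $\phi^1$-invariant, and the assumption that $\phi^1$ is saturated yields $h(\phi^1, G_\mu(\phi^1)) = h_\mu(\phi^1)$. The crucial step is then the inclusion $G_\mu(\phi^1) \subseteq G_\mu(\Phi)$. Once it is in hand, monotonicity of Bowen topological entropy under set inclusion, Theorem A applied with $t=1$, and Abramov's formula combine to give
\[
h_\mu(\Phi) \;=\; h_\mu(\phi^1) \;=\; h(\phi^1, G_\mu(\phi^1)) \;\le\; h(\phi^1, G_\mu(\Phi)) \;=\; h(\Phi, G_\mu(\Phi)),
\]
which is the desired reverse inequality and closes the argument.

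To prove $G_\mu(\phi^1) \subseteq G_\mu(\Phi)$ I would use the standard averaging trick. For each $\varphi \in C(X)$ define $\tilde{\varphi}(y) := \int_0^1 \varphi(\phi^s(y))\, ds$, which is continuous on $X$ and satisfies $\int \tilde{\varphi}\, d\mu = \int \varphi\, d\mu$ by $\Phi$-invariance of $\mu$. Given $x \in G_\mu(\phi^1)$ and $T > 0$, setting $n := \lfloor T \rfloor$ and decomposing the integral $\int_0^T \varphi(\phi^t(x))\, dt$ into unit-length blocks $[j,j+1]$ plus a tail of length less than $1$ yields
\[
\frac{1}{T}\int_0^T \varphi(\phi^t(x))\, dt \;=\; \frac{n}{T}\cdot\frac{1}{n}\sum_{j=0}^{n-1}\tilde{\varphi}(\phi^j(x)) \;+\; O\!\left(\tfrac{\|\varphi\|_\infty}{T}\right),
\]
whose right-hand side tends to $\int \tilde{\varphi}\, d\mu = \int \varphi\, d\mu$ as $T \to \infty$, since $x$ is $\phi^1$-generic for $\mu$. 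Thus $x \in G_\mu(\Phi)$.

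The only delicate point is precisely this inclusion, and it is the reason the assumption of the theorem points in the right direction. As the introduction notes, generic points for $\Phi$ need not be generic for the time-1 map, so the opposite inclusion $G_\mu(\Phi)\subseteq G_\mu(\phi^1)$ can genuinely fail; fortunately we do not need it. The inclusion that does hold uses in an essential way that $\mu$ is $\Phi$-invariant (not merely $\phi^1$-invariant), so that $\int \tilde{\varphi}\, d\mu = \int \varphi\, d\mu$. Once this one-sided inclusion is observed, the theorem follows purely by packaging Theorems A and B together with Abramov's formula, so the main obstacle is conceptual rather than computational.
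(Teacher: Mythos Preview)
Your proof is correct and follows essentially the same route as the paper: the chain $h_\mu(\phi^1)=h(\phi^1,G_\mu(\phi^1))=h(\Phi,G_\mu(\phi^1))\le h(\Phi,G_\mu(\Phi))\le h_\mu(\Phi)$ is exactly what the paper writes, invoking saturation of $\phi^1$, Theorem~A, the inclusion $G_\mu(\phi^1)\subseteq G_\mu(\Phi)$, and Theorem~B in turn. The only cosmetic difference is that you reprove the inclusion $G_\mu(\phi^1)\subseteq G_\mu(\Phi)$ from scratch via the averaged observable $\tilde\varphi$, whereas the paper simply cites it as Theorem~\ref{conten}; your version of that step is in fact slightly cleaner, since applying genericity directly to the single continuous function $\tilde\varphi$ avoids the interchange of limit and integral used in the paper's proof of Theorem~\ref{conten}.
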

The reciprocal of Theorem \ref{teoC} does not hold, as  shown in Remark \ref{r-teoCnvale}(b).

To present examples of saturated flows, we derive, into our context, some well known results for discrete dynamical systems satisfying a specification type property.
We propose a definition of almost specification for flows and prove Theorem \ref{teoD}, establishing that a flow has the almost specification property if and only if its time-$t$ map does.

\begin{maintheorem}\label{teoD}
A continuous flow $\Phi$ satisfies the almost specification property if and only if its time-$t$ map $\phi^t$ satisfies the almost specification property for any $t\neq0$.
\end{maintheorem}

Given a dynamical system $f:X\to X$, it is also interesting to understand the behavior of the complement $\displaystyle{\hat{X}(f)=\bigcup_{\varphi\in C(X)}\hat{X}(f,\varphi)}$ of $G_\mu(f)$ on $X$.
A point $x\in \hat{X}(f,\varphi)$ if the Birkhoff average $\frac{1}{n}\sum_{j=0}^{n-1}\varphi(f^j(x))$ does not converge and,
in this case, we say that $x$ is   {\it irregular} (or ``{\it non-typical}'') with respect to $\varphi:X\rightarrow\mathbb{R}$.
Similar definitions are given for flows. Again, by  Birkhoff's Ergodic Theorem, the set $\hat{X}(f,\varphi)$ is not detectable by invariant measures. Nevertheless, it can be large from the dimension point of view. In \cite{Barreira-Schmeling(2000)}, the authors proved that, for certain dynamical systems (e.g. subshifts of finite type), $\hat{X}(f,\varphi)$ carries full topological entropy {when it is nonempty}; that is, $h(f,\hat{X}(f,\varphi))=h(f)$. { More generally, the same is proved in \cite{Thompson(2012)} for continuous { maps with} the almost specification property (see also \cite{Ercai-Kupper-Lin,Thompson(2010)}).}
{ As a consequence of Theorem \ref{teoD}, we extend the results of Meson and Vericat \cite{Meson-Vericat} and Thompson  \cite{Thompson(2012)} {for flows}. In particular, we obtain that topologically mixing Anosov flows are saturated, and the set of irregular points carries total entropy. This class of examples includes all the geodesic flows on closed manifolds with negative curvature.
}

This paper is organized as follows: Section 2 summarizes the basic properties of the set of generic points for continuous maps and flows. In Section 3, we recall the definition of topological and metric entropy, we define entropy for flows on noncompact sets and prove the Theorem \ref{teoA}. In Section 4, we give the proof of Theorem \ref{teoB}. Finally, in Section 5, we derive
Theorems \ref{teoC} and \ref{teoD} and extend the results in \cite{Pfister-Sullivan(2007)}
and in \cite{Thompson(2012)} {for flows}. 
\section{Generic points}

%PAREI AQUI

This section collects some relevant properties of the set of generic points with respect to an invariant measure.
We establish a relationship between generic points for a flow and for { its} time-1 map.
{Some} results in this section are probably folklore, but to the best of our knowledge, neither a proof {n}or a {precise} statement {has ever appeared} in the literature, and we include their proofs for completeness.

Throughout this paper,  $(X,d)$ is a compact metric space, $f:X\rightarrow X$ a continuous map and $\Phi:\mathbb{R}\times {X}\rightarrow X$  a continuous flow on $X$.
 We write  $S$ to refer to $f$ or $\Phi$ when no confusion can arise. 
This will help us avoid repeating definitions in these different settings. 
We denote the nonempty, compact and convex set of all the $S$-invariant Borel probability measures endowed with the weak star topology by $\mathfrak{M}(S)$.

Recall that a measure $\mu$ is ergodic if $\mu(A)\in\{0,1\}$ whenever $A$ is {an $S$-}invariant {Borel set}.

\begin{defi} \textnormal{(i)} A point $x\in X$ is a {\bf generic point} for $f$ {with} respect to $\mu\in\mathfrak{M}(f)$ if for { every} continuous function  $\OB$ it holds
$$
\Gen.
$$
\indent\textnormal{(ii)} A point $x\in X$ is a {\bf generic point} {for} $\Phi$ {with} respect to $\mu\in\MF$ if
$$
\GenF \quad \mbox{for { every} continuous function}\quad \OB.
$$
\end{defi}

{The set of generic points for $S$ ($S=f\mbox{ or }\Phi$) with respect to $\mu$ is denoted by $G_\mu(S)$. We say that a point $x$ is {\bf quasi-regular} for $S$ if it is generic for some invariant measure $\mu\in\MS$ and denotes by $Q(S)\mathrel{\mathop:}=\displaystyle{\bigcup_{\mu\in\mathfrak{M}(S)} G_\mu(S)}$ the set of all the quasi-regular points.}

{The following properties {can be found} in \cite[Chapter 4]{Denker-Grillenberger-Sigmund}:}

\begin{enumerate}\label{propriedadegenerico}
\item[(a)] $G_\mu(S)\cap G_\nu(S)=\emptyset$ if $\mu\neq\nu$.\\
\item[(b)] The set $G_\mu(S)$ is a  Borel set. Moreover, $\mu(G_\mu(S))=1$ if $\mu$ is ergodic and  $\mu(G_\mu(S))=0$ otherwise. {In particular, $Q(S)$ has full measure with respect to any {$S$-invariant measure}. Note also that $G_\mu(S)$ can be empty and if $S$ is uniquely ergodic with {a} unique invariant measure $\mu$, { then} $G_\mu(S)=X$.}
\end{enumerate}

We recall that $\mu$ is invariant by the flow  $\Phi$ if $\mu$ is invariant by $\phi^t$ for all $t\in\mathbb{R}$.

\begin{ex} Let $\mathbb{S}^1=\mathbb{R}/\mathbb{Z}$ be the unit circle and  $\Phi:\mathbb{R}\times\mathbb{S}^1\rightarrow\mathbb{S}^1$ be given by
$$
\Phi(t,x)=R_t(x),
$$
where $R_t(x)$ is the rotation {with}  angle $t:[t]+x$. 
This flow is uniquely ergodic with { a} unique invariant measure, the Lebesgue measure $\lambda$.
Then  $G_\lambda(\Phi)=\mathbb{S}^1$ but, for $t\in \Q, \,\,G_\lambda(\phi^t)=\emptyset$ and for $t\in \RR\setminus \Q,\,\,G_\lambda(\phi^t)=\mathbb{S}^1$.
\end{ex}
This shows that $\mu$-generic points for the flow $\Phi$ are not necessarily { $\mu$-generic} for $\phi^t$.
But the proposition below gives a relation between these sets. We denote $C(X):=\{\varphi:X\to \mathbb{R} :\varphi\mbox{ is continuous}\}$ endowed with the uniform topology.

\begin{prop}\label{conten}
If $\mu\in\mathfrak{M}(\Phi)$ then $G_\mu(\phi^t)\subseteq G_\mu(\Phi)$ for all 
$t\in\mathbb{R}\setminus \{0\}$.
\end{prop}
\begin{proof} It is enough verify {for}
 $t=1$. {Let  $x\in G_\mu(\phi^1)$  and $\varphi\in C(X)$. Then $\phi^j(x) \in G_\mu(\phi^1)$ for all  $ j \geq 1.$ If $[T] $ denotes the integer part of $T\in \RR$, we have }

 %\textcolor{red}{Then the orbit of $x$, $\phi^j(x), \,\,j \geq 1, $ also belongs to $G_\mu(\phi^1)$ and it holds}
 %\textcolor{red}{Then $\phi^j(x) \in G_\mu(\phi^1), \,\,\forall \,\, j \in \NN}$. AQUI
 %Then it holds
$$
\lim_{T\rightarrow\infty}\frac{1}{T}\int_0^T\varphi(\phi^t(x))dt  =
\lim_{T\rightarrow\infty}\frac{1}{T} \left(\sum_{j=0}^{[T ] -1}\int_0^1(\varphi\circ\phi^t)(\phi^jx)dt + \int_{[T]}^T \varphi(\phi^t(x))dt \right) =
$$
$$
=\int_0^1\lim_{T\rightarrow\infty}\frac{1}{T}\sum_{j=0}^{[T]-1}(\varphi\circ\phi^t)(\phi^jx)dt=
\int_0^1\int(\varphi\circ\phi^t)d\mu{dt}=
 \int\varphi{d\mu}.
 $$
This ends the proof.
\end{proof}
\begin{remark}\label{Gnonempty}
If $\mu \in \MF$ and $G_{\mu}(\phi^{1})\neq \emptyset$ then $G_\mu(\Phi)\neq \emptyset$.
\end{remark}
%%%%%%%%%%%%%%%%%%%%%%%%%%%%%%%%%%%%%%%%%%%
%%%%%%%%%%%%%%%%%%%%%%%%%%%%%%%%%%%%%%%%%%%%

Invariant measures for the time-$t$ map are not necessarily invariant for the flow. {But, any $\mu\in\Mft$ produces a measure $\overline{\mu}\in\MF$ defined by  $\overline{\mu}:=\int_0^1\phi^s_*\mu{ds}$, where
$\phi^s_*\mu$ is {given}  by $\phi^s_*\mu(B):=\mu(\phi^{-s}(B))$ for all Borel sets $B$.}
The construction of  $\overline{\mu}$  is a standard one in the ergodic theory of flows, see \cite[p. 968]{Walters8}.

{We recall that $Q(S)$ is the set of quasi-regular points for $S$, where $S$ stands for $f$ or $\Phi$.}
The following result establishes that $Q(\Phi)$ coincides with $Q(\phi^{1})$.

\begin{teo}\label{G=g}
Let $\Phi$ be a continuous flow defined on a compact metric space $X$. Then $Q(\Phi)=Q(\phi^t)$ for all $t\in\mathbb{R}\setminus\{0\}$.
\end{teo}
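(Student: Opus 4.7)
The plan is to establish both inclusions in $Q(\Phi)=Q(\phi^t)$ separately; without loss of generality we take $t>0$ and the case $t<0$ follows by reversing the flow. Set $\mu_n^t:=\frac{1}{n}\sum_{j=0}^{n-1}\delta_{\phi^{jt}x}$, and for $\varphi\in C(X)$ put $\tilde\varphi(y):=\frac{1}{t}\int_0^t\varphi(\phi^s y)\,ds$, which is again in $C(X)$.

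For the inclusion $Q(\phi^t)\subseteq Q(\Phi)$, suppose $x\in G_\nu(\phi^t)$ for some $\nu\in\mathfrak{M}(\phi^t)$ and form the flow-averaged measure $\bar\nu:=\frac{1}{t}\int_0^t\phi^s_*\nu\,ds\in\mathfrak{M}(\Phi)$, as in the construction recalled in the paragraph preceding the theorem. To show $x\in G_{\bar\nu}(\Phi)$, fix $\varphi\in C(X)$ and write $T=nt+r$ with $0\le r<t$. A direct computation gives
$$\frac{1}{T}\int_0^T\varphi(\phi^s x)\,ds\;=\;\frac{nt}{T}\cdot\frac{1}{n}\sum_{j=0}^{n-1}\tilde\varphi(\phi^{jt}x)\;+\;\frac{1}{T}\int_{nt}^T\varphi(\phi^s x)\,ds.$$
The remainder is $O(1/T)$ and the main term converges to $\int\tilde\varphi\,d\nu=\int\varphi\,d\bar\nu$ by the $\phi^t$-genericity of $x$ with respect to $\nu$. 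Hence $x\in G_{\bar\nu}(\Phi)\subseteq Q(\Phi)$.

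For the reverse inclusion $Q(\Phi)\subseteq Q(\phi^t)$, suppose $x\in G_\mu(\Phi)$. Reading the same identity backward yields, for every $\varphi\in C(X)$,
$$\int\tilde\varphi\,d\mu_n^t\;=\;\frac{1}{nt}\int_0^{nt}\varphi(\phi^s x)\,ds\;\longrightarrow\;\int\varphi\,d\mu.$$
By weak-$*$ compactness of $\mathfrak{M}(X)$ the sequence $\{\mu_n^t\}$ has accumulation points, each of which is $\phi^t$-invariant and satisfies the averaging constraint $\bar\nu=\mu$.

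I expect the main obstacle to be showing that $\{\mu_n^t\}$ actually converges, not merely subsequentially. For this I would use the telescoping identity $\mu_{n+1}^t-\mu_n^t=\frac{1}{n+1}(\delta_{\phi^{nt}x}-\mu_n^t)$, which forces $\mu_{n+1}^t-\mu_n^t\to 0$ in the weak-$*$ topology; consequently the accumulation set $V$ is compact and connected in $\mathfrak{M}(X)$. Combined with the identity $\mu_n^{t,(s)}=\phi^s_*\mu_n^t$ (where $\mu_n^{t,(s)}$ is the $\phi^t$-empirical starting at $\phi^s x$), which interlaces the accumulation sets along the $\Phi$-orbit, and with the rigid fiber constraint $\bar\eta=\mu$ on $\eta\in V$, one can then collapse $V$ to a single point $\nu$; this last rigidity step is the technical heart of the proof. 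Once the limit $\nu$ exists we obtain $x\in G_\nu(\phi^t)\subseteq Q(\phi^t)$, completing the inclusion.
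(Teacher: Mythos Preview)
Your argument for $Q(\phi^t)\subseteq Q(\Phi)$ is correct and is essentially the paper's: form $\bar\nu=\frac{1}{t}\int_0^t\phi^s_*\nu\,ds$ and compute directly that $x\in G_{\bar\nu}(\Phi)$.

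For $Q(\Phi)\subseteq Q(\phi^t)$ there is a genuine gap at precisely the point you flag as ``the technical heart.'' The ingredients you assemble---compactness and connectedness of the accumulation set $V$, the averaging constraint $\bar\eta=\mu$ for every $\eta\in V$, and the interlacing $\mu_n^{t,(s)}=\phi^s_*\mu_n^t$---do not force $V$ to be a singleton. The fiber $F_\mu=\{\eta\in\mathfrak{M}(\phi^t):\bar\eta=\mu\}$ is convex (hence connected) and can be very large: for the rotation flow on $\mathbb{T}^1$ with $t\in\mathbb{Q}$, every $\phi^t$-invariant measure lies in $F_{\mathrm{Leb}}$. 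Thus connectedness of $V\subseteq F_\mu$ adds nothing. Your interlacing gives $V^{(s)}=\phi^s_*V$, but since each $\eta\in V$ is $\phi^t$-invariant a short change of variables shows $\overline{\phi^s_*\eta}=\bar\eta=\mu$ automatically, so the translated sets satisfy exactly the same constraint and no new information is produced. The measure-level approach stalls here.

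The paper avoids this by working \emph{one test function at a time} rather than with the whole set $V$. Given two subsequential limit functionals $C_1,C_2$ (equivalently two points of $V$), the averaging constraint says that the continuous $1$-periodic function $s\mapsto C_1(\overline\varphi\circ\phi^s)-C_2(\overline\varphi\circ\phi^s)$ has mean zero, hence a zero $\bar t(\overline\varphi)\in[0,1]$. Replacing $\overline\varphi$ by $\overline\varphi\circ\phi^{-s}$ yields a self-map $L:[0,1]\to[0,1]$, $L(s)=\bar t(\overline\varphi\circ\phi^{-s})$, and a fixed point of $L$ gives $C_1(\overline\varphi)=C_2(\overline\varphi)$ directly. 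This one-dimensional intermediate-value/fixed-point mechanism is the missing idea; without it, or a genuine substitute, your rigidity step does not close.
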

\begin{proof} {It is enough to prove for $t=1$.}
We prove first that {$Q(\Phi)\subseteq Q(\phi^1)$.}
Let $x\in Q(\Phi)$ and fix a continuous map $\overline{\varphi}:X\rightarrow \mathbb{R}$.
Consider the sequence
\begin{equation}\label{SumBir}
\frac{1}{n}\sum_{j=0}^{n-1}(\overline{\varphi}\circ f^j)(x)
\end{equation}
where $f:=\phi^1$.
{Consider any two increase sequences of positive integers
 $\{n_k^1\}_{k\geq0}$ and  $\{n_k^2\}_{k\geq0}$ such that the limits below exist}
$$
\lim_{k\rightarrow\infty}\frac{1}{n_k^1}\sum_{j=0}^{n_k^1-1}(\overline{\varphi}\circ f^j)(x) \qquad\mbox{and}\qquad\lim_{k\rightarrow\infty}\frac{1}{n_k^2}\sum_{j=0}^{n_k^2-1}(\overline{\varphi}\circ f^j)(x).
$$

As  $C(X)$ is a separable space, we can find subsequences  {of 
$\{n_k^1\}_{k\geq0}$ and  $\{n_k^2\}_{k\geq0}$}
denoted by  $\{n_k^1(x)\}_{k\geq1}$ and  $\{n_k^2(x)\}_{k\geq1}$,
respectively, such that the limits
$$
C_1(\varphi):=\lim_{k\rightarrow\infty}\frac{1}{n_k^1(x)}\sum_{j=0}^{n_k^1(x)-1}(\varphi\circ f^j)(x) \qquad\mbox{and}\qquad C_2(\varphi):=\lim_{k\rightarrow\infty}\frac{1}{n_k^2(x)}\sum_{j=0}^{n_k^2(x)-1}(\varphi\circ f^j)(x)
$$
do exist for all   $\varphi\in C(X)$.

{For $i=1,2$, we also denote by $C_i$ the induced linear operator $C_i:C(X)\rightarrow \mathbb{R}$, $\varphi\mapsto C_i(\varphi)$}.
These operators are continuous, positive defined and satisfy $C_1({\bf 1}_X)=C_2({\bf 1}_X)={\bf 1}_X$ and by the Theorem of Riesz define two measures $\mu_i$ invariant by the time-$1$ map such that
$C_1(\varphi)=\int{\varphi}d\mu_1$ and $C_2(\varphi)=\int{\varphi}d\mu_2$ for all $\varphi\in C(X)$.

Since  $x$ is a quasi-regular point {for $\Phi$,} we have
$$
\lim_{T\rightarrow\infty}\frac{1}{T}\int_0^T\varphi(\phi^t(x))dt
= \lim_{k\rightarrow\infty}\frac{1}{n_k^1(x)}\sum_{j=0}^{n_k^1(x)-1}\int_0^1(\varphi\circ\phi^t)(\phi^jx)dt
= \int_0^1C_1(\varphi\circ\phi^t)dt.
$$
Similarly, $\lim_{T\rightarrow\infty}\frac{1}{T}\int_0^T\varphi(\phi^t(x))dt=\int_0^1C_2(\varphi\circ\phi^t)dt$ and so
$$
\int_0^1C_1(\varphi\circ \phi^t)dt=\int_0^1C_2(\varphi\circ \phi^t)dt.
$$
Since  $C_1$ and $C_2$ are continuous, there is  $\overline{t}=\overline{t}(\varphi) \in [0,1]$ with  $C_1(\varphi\circ \phi^{\overline{t}})=C_2(\varphi\circ \phi^{\overline{t}})$ and we can take $\overline{t}(\varphi)$ depending continuously on $\varphi$.
Define $L:[0,1]\rightarrow[0,1]$ by $L(s)=\overline{t}(\overline{\varphi}\circ{\phi}^{-s})$ and note that $L$ is also continuous.
Then $L$ admits a fixpoint $\overline{s}$ in $[0,1]$. 
Hence, { for the continuous function $\overline{\varphi}\circ \phi^{-\overline{s}}$}, we have
$$
C_1(\overline{\varphi})=C_1(\overline{\varphi}\circ \phi^{-\overline{s}}\circ \phi^{\overline{t}(\overline{\varphi}\circ \phi^{-\overline{s}})})=C_2(\overline{\varphi}\circ \phi^{-\overline{s}}\circ \phi^{\overline{t}(\overline{\varphi}\circ \phi^{-\overline{s}})})=C_2(\overline{\varphi}).
$$
This proves that the sequence (\ref{SumBir}) converges, and this implies  that the  limit
$$
\lim_{k\rightarrow\infty}\frac{1}{n}\sum_{j=0}^{n}(\varphi\circ f^j)(x)
$$
exists for all continuous maps $\varphi:X\rightarrow\mathbb{R}$, and thus we conclude that $x\in Q(\phi^1).$

Next we prove $Q(\phi^1)\subseteq Q(\Phi)$.
Let $x\in Q(\phi^1)$, then $x$ is a generic point to $\phi^1$ with respect to some $\phi^1$-invariant measure $\mu_x$.
Consider the $\Phi$-invariant measure $\overline{\mu}_x:=\int_0^1\phi^t_* \mu_x dt$.

We claim that $x\in G_{\overline{\mu}_x}(\Phi)$. Indeed, for all $\varphi\in C(X)$, it holds
$$
\lim_{T\rightarrow\infty}\frac{1}{T}\int_0^T\varphi(\phi^tx)dt =
\lim_{T\rightarrow\infty}\frac{1}{T} \left(\sum_{j=0}^{[T ]-1}\int_0^1(\varphi\circ\phi^t)(\phi^jx)dt + \int_{[T]}^T \varphi(\phi^t(x))dt \right) 
%\int_0^1\lim_{T\rightarrow\infty}\frac{1}{T}\sum_{j=0}^{T-1}(\varphi\circ\phi^t)(\phi^ix)dt
= \int\varphi d\overline{\mu}_x.
$$
This shows that $x\in G_{\overline{\mu}_x}(\Phi)\subseteq Q(\Phi)$ as claimed.

{All together, complete the proof of the Theorem \ref{G=g}}.
\end{proof}

\begin{remark}\label{r.medidamubarra}
For each $\mu \in \fM(\phi^{1})$ the set $G_{\mu}(\phi^{1})$ is entirely contained in the set $G_{\overline{\mu}}(\Phi)$, where $\overline{\mu}:=\int_{0}^{1} \phi^{t}_{*}\mu dt.$
\end{remark}

A useful characterization of generic points is obtained from the {\it empirical measures}.
{Recall that given $x\in X$, }
to each $n\in\mathbb{N}$, $\xi_n(x)=\frac{1}{n}\sum_{j=0}^{n-1}\delta_{f^j(x)}$ is a Borel  probability called {\bf empirical measure} concentrated in the  orbit of $x$.
We denote by $V_f(x)$ the set of all limit points of the sequence  $\{\xi_n(x)\}_{n\geq1}$; that is,
$$
V_f(x)=\{\mu:\mbox{ exists }\,\, \{n_k\}_{k\geq1}\mbox{ such that }\xi_{n_k}(x)\rightarrow\mu\mbox{ as }k\rightarrow\infty\},
$$
where the convergence is in the weak* topology in $\mathfrak{M}(X)$.
The set  $V_f(x)$ is non empty, compact, connected, and  $V_f(x)\subset \mathfrak{M}(f)$ {(see \cite{Denker-Grillenberger-Sigmund}). Note that $x \in G_{\mu}(f)$ if and only if $V_{f}(x)=\{\mu\}$. We shall use this characterization of generic points later in the proof of the Theorem \ref{teoB}.}

\section{Entropy for non compact sets and proof of Theorem \ref{teoA}}
Furstenberg proves in  \cite{Furstenberg2} that if  $f:\SS^1\rightarrow\SS^1$ is defined by  $f(z)=z^n$ and  $Y\subseteq\SS^1$
 is a compact positive invariant set, then the Hausdorff dimension of $Y,\,\, \dim_H(Y)$ satisfies
$
\dim_H(Y)=h(f|_Y)/\log{n},
$
and if  $\mu\in\mathfrak{M}(f)$ is ergodic, Colebrook proves in  \cite{Colebrook} that {$
\dim_H(G_\mu(f))=h_\mu(f)/\log(n).
$}

Motivated by these results, given a continuous map  $f:X\rightarrow X$, where $X$ is a compact metric space,  Bowen introduced in \cite{Bowen(1973)}  the notion of entropy for subsets of $X$, {neither necessarily compact nor invariant.}
In this section, we propose a definition of entropy for non compact sets in the setting of flows and prove
an {\em{Abramov-type}} formula that relates the entropy of a flow over non compact sets with the entropy of the time-t map over this set.
To do so, we proceed as follows.

We start reviewing the concept of metric and topological entropy for a dynamical system and recall some well known results in this case.
{In the sequel, we extend in a natural way {Bowen's} definition of entropy for noncompact sets for flows and { prove} Theorem A.}

\subsection{Entropy of a dynamical system}
In the following  $f : X \to X$ will be a continuous map defined on a {{compact} metric space $X$}.
For $\varepsilon > 0$ and $n \ge 1$,
we consider the {\it dynamical ball} of radius $\varepsilon > 0$ and length $n$ around $x \in X$:
$$
B_n(x, \varepsilon) = \{y \in X: d(f^j(x), f^j(y)) < \varepsilon \mbox{ for every } 0 \le j \le n-1\}.
$$
{Let $r_n(\varepsilon)$ denote the smallest number of dynamical balls of radius $\varepsilon$ and length $n$, covering $X$.
Then the topological entropy $h(f)$ can be defined by
$$
h(f):=\lim_{\varepsilon\rightarrow0}\limsup_{n\rightarrow\infty}\frac{1}{n}\log{r_n(\varepsilon)}.
$$
(see \cite[Chapter 7]{Walters}).}

{Let $\mu$ be an invariant measure and { let $\cP$ be} a finite, measurable partition
of $X$. The {\it metric entropy} of $\mu$ corresponding to the partition $\cP$ is defined as
$$
h_\mu(\cP) := -\lim_{n\to\infty}\frac{1}{n} \sum_{B\in\cP^{n}}\mu(B) \log \mu(B),
$$
where $\cP^1=\cP$ and $\cP^{n}$ is the $n$th joint of $\cP$:
$
\cP^{n} = \cP \vee f^{-1}\cP \vee \cdots \vee f^{-(n-1)}\cP.
$
The {\bf metric entropy} of $f$ with respect to $\mu$ is defined as
$$
h_{\mu}(f) := \sup\{h_{\mu}(\cP): \cP \mbox{ is a finite partition of } X\}.
$$
}
{
\indent Recall that two systems $(X_1,\mathcal{B}_1,\mu_1,f_1)$ and $(X_1,\mathcal{B}_1,\mu_1,f_2)$ are  {\bf isomorphic} if there is $M_i\subseteq X_i$, with $\mu_i(M_i)=1$ and $f_i(M_i)\subseteq M_i$, $i=1,2$, and there exists an invertible transformation $h:M_1\rightarrow M_2$ which preserves the measures $\mu_1,\mu_2$ satisfying
$$
(h\circ f_1)(x)=(f_2\circ h)(x)\mbox{ for all }x\in M_1.
$$
The next lemma is a consequence of \cite[Theorem 4.11]{Walters}:
\begin{lemma}\label{l-aux} For each $t\in[0,1]$, the systems $(X,\mathcal{B}(X),\mu,\phi^1)$ and $(X,\mathcal{B}(X),\phi^t_*\mu,\phi^1)$ are isomorphic. In particular, $h_\mu(\phi^1)=h_{\phi^t_*\mu}(\phi^1)$.
\end{lemma}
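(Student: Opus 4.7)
The plan is to take the time-$t$ map $\phi^{t}$ itself as the isomorphism between the two measure-preserving systems $(X,\mathcal{B}(X),\mu,\phi^{1})$ and $(X,\mathcal{B}(X),\phi^{t}_{*}\mu,\phi^{1})$. The crucial point is that, because the flow $\Phi$ is an $\RR$-action, its time maps commute: $\phi^{t}\circ \phi^{1}=\phi^{t+1}=\phi^{1}\circ \phi^{t}$. This is exactly the intertwining condition required by the definition of isomorphism recalled just above the lemma.

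More concretely, I would first verify that $h\mathrel{\mathop:}=\phi^{t}\colon X\to X$ is a Borel-measurable bijection with Borel-measurable inverse $\phi^{-t}$; this is automatic since $\Phi$ is a continuous flow on a compact metric space, so $\phi^{t}$ is a homeomorphism. Taking $M_{1}=M_{2}=X$ (which have full measure for $\mu$ and $\phi^{t}_{*}\mu$ respectively) and observing that $\phi^{1}(X)\subseteq X$, the only remaining conditions in the definition of isomorphism are that $h$ transports $\mu$ to $\phi^{t}_{*}\mu$, which is the very definition of the pushforward, and that $h\circ \phi^{1}=\phi^{1}\circ h$, which, as noted, follows from the commutativity of the flow. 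Thus $h=\phi^{t}$ exhibits the desired isomorphism.

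For the entropy equality, I would invoke the general fact (recorded, for instance, as Theorem 4.11 in Walters' book, which is exactly the reference cited) that isomorphic measure-preserving systems have the same measure-theoretic entropy. Applied to the two systems above with common transformation $\phi^{1}$, this yields $h_{\mu}(\phi^{1})=h_{\phi^{t}_{*}\mu}(\phi^{1})$.

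There is no real obstacle here: the whole content of the lemma is the observation that on a flow one has a canonical equivariant map between $(\mu,\phi^{1})$ and $(\phi^{t}_{*}\mu,\phi^{1})$, namely $\phi^{t}$ itself, and then the abstract entropy-is-an-isomorphism-invariant theorem does the rest.
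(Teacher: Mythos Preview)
Your proposal is correct and takes essentially the same approach as the paper: the paper simply states that the lemma is a consequence of \cite[Theorem 4.11]{Walters}, and your argument spells out exactly the implicit content behind that citation --- namely that $\phi^{t}$ itself furnishes the isomorphism (via the flow relation $\phi^{t}\circ\phi^{1}=\phi^{1}\circ\phi^{t}$ and the definition of the pushforward), after which Walters' theorem yields the entropy equality.
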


{
\begin{lemma}\label{LemmaEntropy} $h_{\overline{\mu}}(\phi^1)= h_\mu(\phi^1)$, where 
$\overline{\mu}=\int_0^1\phi^t_*\mu\,dt$.
\end{lemma}

\begin{proof}
{Let $\mathfrak{M}_{\mathrm{erg}}(\phi^1)$ be the set of  invariant ergodic measures of $\phi^1$.}
If $\mu\in{\mathfrak{M}_{\mathrm{erg}}(\phi^1)}$, note that $\phi^t_*\mu\in{\mathfrak{M}_{\mathrm{erg}}(\phi^1)}$, so $\{\phi^t_*\mu\}_{ t}$ is the ergodic decomposition of the measure $\overline{\mu}$. Observe that $h_{\phi^t_*\mu}(\phi^1)=h_\mu(\phi^1)$ (Lemma \ref{l-aux}). By Jacobs's Theorem (\cite[Theorem 8.4]{Walters}), {it} follows that
$$
	h_{\overline{\mu}}(\phi^1)
	=\int_0^1h_{\phi^t_*\mu}(\phi^1)\,dt
	=h_\mu(\phi^1).
$$
To consider the general case, {take}  $\mu\in\Mfone$ {and let } $\mu=\int\mu_{P}\,d\hat\mu (P)$ be its ergodic decomposition (relative to $\phi^1$). Observe that
$$
	\phi^t_*\mu
	=\phi^t_*\int\mu_{P}\,d\hat\mu(P)
	=\int\phi^t_*\mu_{P}\,d\hat\mu(P).
$$
Because  $\mu_P$ is $\phi^1$-ergodic for $\hat\mu$-almost every $P$, $\phi^t_*\mu_{P}$ is also $\phi^1$-ergodic for any $t$ and since
\[
	\bar\mu
	= \int_0^1\phi^t_*\mu\,dt
	= \int_0^1\phi^t_*\int\mu_P\,d\hat\mu(P)\,dt
	= \int_0^1\int\phi^t_*\mu_P\,d\hat\mu(P)\,dt
\]
{it } follows that $\{\phi^t_*\mu_P\}_{t,P}$ is the ergodic decomposition for $\bar\mu$.
Applying Jacobs's Theorem again, we have
$$
	h_{\overline{\mu}}(\phi^1)
	=\int_0^1\int{h_{\phi^t_*\mu_{P}}(\phi^1)}\,d\hat\mu(P)\,{dt}
	=\int_0^1\int{h_{\mu_{P}}(\phi^1)}\,d\hat\mu(P)\,{dt}
	= \int_0^1h_{\mu}(\phi^1)\,dt
	=h_{\mu}(\phi^1).
$$
 The statement of the lemma follows.
\end{proof}
}

\begin{defi} Let $\Phi$ be a continuous flow defined on a compact metric space $(X,d)$. The {\bf{topological entropy}} (resp. {\bf{metric entropy}}) of $\Phi$, $h(\Phi)$ (resp. $h_\mu(\Phi)$),
is the topological entropy (resp. metric entropy) of its time-one map $\phi^{1}$.
\end{defi}
\noindent In connection with topological entropy, the variational principle establishes that
\begin{center}
$
h(S) = \sup\{h_{\mu}(S):\mu\in \mathfrak{M}(S)\}.
$
\end{center}

\begin{defi} Let $(X,S)$ be a dynamical system. We say that $\mathfrak{M}_{\mathrm{erg}}(S)$ is \textbf{entropy dense} in $\mathfrak{M}(S)$, if for every $S$-invariant $\mu$ there is a sequence of ergodic $S$-invariant measures $\{\mu_n\}_{n\geq1}$ converging to $\mu$ such that  $h_{\mu_n}(S)$ converges to $h_\mu(S)$.
\end{defi}

\begin{teo}\label{Dense-Entropy} Let $\Phi$ be a continuous flow defined on a compact metric space $X$. If $\mathfrak{M}_{\mathrm{erg}}(\phi^1)$ is entropy dense, then $\mathfrak{M}_{\mathrm{erg}}(\Phi)$ is entropy dense.
\end{teo}
\begin{proof}  Let  $\mu\in\mathfrak{M}(\Phi)$. In particular,  $\mu\in\mathfrak{M}(\phi^1)$ and since $\mathfrak{M}_{\mathrm{erg}}(\phi^1)$ is entropy dense in $\mathfrak{M}(\phi^1)$, there is a sequence $\mu_n\in\mathfrak{M}_{\mathrm{erg}}(\phi^1)$ converging to $\mu$ (in the weak$\empty^*$ topology) and  $h_{\mu_n}(\phi^1)\rightarrow h_\mu(\phi^1)$ as $n\rightarrow\infty$.  {For each $n \geq 1,$ consider the measure}
$$
\overline{\mu}_n:=\int^1_0\phi^{t}_*\mu_ndt.
$$
Since  $\mu_n$ is ergodic respect to $\phi^1$, the measure $\overline{\mu}_n$ is ergodic respect to  $\Phi$, for all $n\geq1$.

\noindent {\bf Claim.} \/ $\overline{\mu}_n$ converges to $\mu$.

\noindent \textit{Proof of the Claim.}
Let $\varphi\in C(X)$. The function  $\displaystyle{t\mapsto\int\varphi{d\phi^{t}_*\mu_n}}$ is Lebesgue measurable, and applying the Dominated Convergence Theorem, we get

\begin{eqnarray*}
\lim_{n\rightarrow\infty}\int\varphi{d\overline{\mu}_n} & = & \lim_{n\rightarrow\infty}\int_0^1\int\varphi{d\overline{\mu}_n}dt
 =  \int_0^1\lim_{n\rightarrow\infty}\int\varphi{d\overline{\mu}_n}dt\\
& = & \int_0^1\int\varphi{d\mu}dt
 =  \int\varphi{d\mu}.\qquad\square
\end{eqnarray*}

\noindent Now,  Lemma \ref{LemmaEntropy}  implies
$$
\lim_{n\rightarrow\infty}h_{\overline{\mu}_n}(\Phi) = \lim_{n\rightarrow\infty}h_{\overline{\mu}_n}(\phi^1) = \lim_{n\rightarrow\infty}h_{\mu_n}(\phi^1) = h_\mu(\phi^1) = h_\mu(\Phi).
$$
This shows that $\mathfrak{M}_{\mathrm{erg}}(\Phi)$ is entropy dense in $\MF$.%\qquad$
\end{proof}

It is clear that if $\mathfrak{M}_{\mathrm{erg}}(S)$ is entropy dense, then  $\mathfrak{M}_{\mathrm{erg}}(S)$ is dense in $\mathfrak{M}(S)$ (but the converse is not true as shown in \cite[Proposition 4.29]{Gelfert-Kwietniak(2018)}).

\subsection{Topological entropy for non compact sets for flows and proof of Theorem \ref{teoA}}
This section proposes a definition of topological entropy for non compact sets for continuous flows and proves  Theorem \ref{teoA}.
To this end,
let $\Phi$ be a continuous flow on a compact metric space   $X$ and   $\cU$ be a finite open cover of $X$.
If  $B\subseteq X$, we set $B\prec\cU$ whenever $B$ is contained in some subset of  $\cU$. Analogously, for any collection $\{B_i\}_{i\in I}, B_i\subset X$, we set $\{B_i\}_{i\in I}\prec \cU$ if each $B_i$ is contained in some element of $\cU$.

Define the quantities:
\begin{enumerate}
\item[(a)]$N(B)=N(\Phi,\cU,B)$ is the biggest non negative number such that
\ $\phi^t(B)\prec\cU$ for all $0\leq t<N(B)$. We put
$N(B)=0$ if $B\nprec\cU$ and $N(B)=\infty$ if $\phi^t(B)\prec\cU$ for all $t\geq0$.
\item[(b)]$ D(B):=D(\Phi,\cU,B)=\left\{
\begin{array}{cl}
\exp(-N(B)) & \mbox{ if } N(B) \mbox{ is finite}\\
0 &  \mbox{ if }  N(B)=\infty
\end{array}\right.
$
\item[(c)] {$S_\alpha(\mathcal{B}):=S_\alpha(\Phi,\cU,\mathcal{B})=\sum_{i\geq1}D(B_i)^\alpha$ if $\alpha\in\mathbb{R}$ and $\mathcal{B}=\{B_i\}_{i\geq1}$ is a countable collection of subsets of ~$X$.}
\end{enumerate}
For $Y\subseteq X$, define
{$
\Gamma_\alpha(Y):=\Gamma_\alpha(\Phi,\cU,Y)=\lim_{\varepsilon\rightarrow0}\inf\{S_\alpha(\mathcal{E}):\mathcal{E}=\{E_i\}_{i\geq1}\mbox{ is a cover of }Y \mbox{ with }D(E_i)<\varepsilon\}.
$}
Note that $\Gamma_\alpha$ is an exterior measure and satisfies
$\Gamma_\alpha(Y)\leq\Gamma_{\overline{\alpha}}(Y)$ if $\alpha>\overline{\alpha}$. Moreover, $\Gamma_\alpha(Y)\not\in\{0,\infty\}$ for at most one unique value  $\alpha$.
Next, define
$$
h_\cU(\Phi,Y):=\inf\{\alpha:\Gamma_\alpha(Y)=0\}.
$$
Finally, the {\bf topological entropy of} $\Phi$ {\bf over} $Y$ is defined by
$$
h(\Phi,Y):=\sup\{h_\cU(\Phi,Y):\cU\mbox{ is a finite open cover of }X\}.
$$
For discrete dynamical systems $(X,f)$  similar quantities are defined, 
and are denoted by  $n(B)=n(f,\cU,B), d(B)=d(f,\cU,B), s_\alpha(\mathcal{B})=s_\alpha(f,\cU,\mathcal{B})$ and $\gamma_\alpha(Y)=\gamma_\alpha(f,\cU,Y)$ respectively. {See {\cite{Bowen(1973)}} for more details.}

{\bf{Proof of Theorem \ref{teoA}.}}\/
Fix $\tau>0$ and let
$\cU=\{U_1,U_2,...,U
_\ell\}$ be a finite open cover of $X$ and $\varepsilon,\alpha>0$.
Next, we prove that if  $Y\subset X$ then  $\tau h(\Phi,Y)\geq h(\phi^\tau,Y)$ and $\tau h(\Phi,Y)\leq h(\phi^\tau,Y)$. For this, we proceed as follows.

Let $\mathcal{B}=\{B_i\}_{i\geq1}$ be a countable cover of $Y$ satisfying  $D(B_i)<\varepsilon$ for all $i\geq1$.
For each $i\geq1$, it is clear that if $N(B_i)=N(\Phi, B_i)=\infty$ then $n(B_i)=n(\phi^\tau,B_i)=\infty$, and if $N(B_i)<\infty$ then  $n(B_i)<\infty$ and,
by definition, we have
$$
N(B_i)\leq \tau n(B_i), \,\, \mbox{which implies }\,\,S_\alpha(\mathcal{B})\geq s_\alpha(\mathcal{B}) \,\,\mbox{and so}\,\,
\Gamma_\alpha(Y)\geq\gamma_{\tau\alpha}(Y).
$$
Thus, taking the infimum over $\alpha$, we get
$
h_{\cU}(\Phi,Y)\geq\frac{1}{\tau}h_{\cU}(\phi^\tau,Y)
$
and taking the supremum over all finite open covers $\cU$ of $X$,
$
h(\Phi,Y)\geq \frac{1}{\tau}h(\phi^\tau,Y).
$\\
{To prove the opposing inequality,} let  $\tilde{\delta}=\tilde{\delta}(\cU)$ be the Lebesgue number of  $\cU$ and $\delta=\delta(\tau,\tilde{\delta})$ such that $d(\phi^t(x),\phi^t(y))<\tilde{\delta}/2$ if $d(x,y)<\delta$ and $0\leq t\leq\tau$.
Let $\V=\{B(x,\delta/2):x\in X\}$. 
Clearly, $\V$ is an open cover of $X$.
 Let ${\cU}'\subseteq\V$ be a {finite sub-cover} of  $X$ and {let  $\C=\{C_i\}_{i\geq1}$  be  }a cover of $Y$ such that {$d(C_i)=d(\phi^\tau,{\cU}',C_i)<\varepsilon$} for all $i\geq1$.

Shrinking $\varepsilon$, if necessary, we can assume that
each  $C_i$ is contained in some ball with  radius  $\delta/2$.
 Then, if  $y,z\in C_i$ we have $d(\phi^t(z),\phi^t(y))<\tilde{\delta}/2$ for all $0\leq t\leq\tau$ and so
$
\phi^t(C_i)\subseteq B(x',\tilde{\delta}/2)\subseteq U_j
$
for some $x'\in X$, some $U_j\in \U$, and all $0\leq t\leq\tau$.
{This implies that
$
N(\Phi,\cU,C_i)\geq \tau n(\phi^\tau,\cU',C_i)\mbox{ for all }i\geq1,
\mbox{and hence } 
D(\Phi,\cU,\C)\leq d(\phi^\tau,{\cU}',\C) \Rightarrow \Gamma_\alpha(\Phi,\cU,Y)\leq \gamma_{\tau\alpha}(\phi^\tau,\cU',Y).
$
{Thus}
$
h_{\cU}(\Phi,Y)\leq \frac{1}{\tau}h_{{\cU}'}(\phi^\tau,Y).
$}
Taking the supremum over all finite open covers  $\cU$ of  $X$, we get
$$
h(\Phi,Y)\leq \frac{1}{\tau}h(\phi^\tau,Y),
$$
finishing the proof of Theorem \ref{teoA}. $\quad\quad \square$

The above theorem shows that our definition of entropy for flows over a subset $Y\subseteq X$ using the approach given by Bowen coincides with the one provided by
the authors in  \cite{Shen-Zhao}, following the approach  of Pesin and Pitskel \cite{Pesin-Pitskel}.
Although both approaches are based on properties of the dimension of Carath\'eodory, {these}  constructions are distinct. One of the main differences is that we use $D(\Phi, \cU, E)$ 
to estimate the entropy. In contrast, Shen and Zhao use a different quantifier given by certain $(T,\epsilon)$-separated sets to define the entropy, see \cite{Shen-Zhao}. We also point out that  Burns and Gelfert in \cite{Burns-Gelfert(2014)} consider the entropy of a flow on non compact sets 
$Z\subseteq X$ as the entropy of its time-one map on $Z$. Although our definition is distinct from that in \cite{Burns-Gelfert(2014)}, Theorem \ref{teoA} implies that these values are the same.  In particular, Theorem \ref{teoA} implies  Abramov's formula used by Burns and Gelfert.  { On the other hand, it was pointed out in \cite[p. 38]{Barreira-Schmeling(2000)} that there are examples such as $h(f^{-1},Y)\neq h(f,Y)$. Thus,  Theorem~ \ref{teoA} does not hold for $t<0$.}

The following properties are straightforward consequences of  Theorem \ref{teoA}:

\begin{prop}\label{PropEntNon} Let  $\Phi$ be a continuous flow, then\\
\indent \textnormal{(i)} $h(\Phi,X)=h(\Phi)=h(\phi^1)$.\\
\indent \textnormal{(ii)}  If $Y\subseteq Z$ then $h(\Phi,Y)\leq h(\Phi,Z)$.\\
\indent \textnormal{(iii)}  If $Y=\cup_{i\geq1}Y_i$ then  $h(\Phi,Y)=\sup_{i\geq1}\{h(\Phi,Y_i)\}$, where $Y_i\subseteq X$.\\
\indent \textnormal{(iv)}  $h(\Phi,\emptyset)=0$.
\end{prop}

%%%%%%%%%%%%%%%%%%%%%%%%%%%%%%%%%%%%%%%%%%%%%%
%%%%%%%%%%%%%%%%%%%%%%%%%%%%%%%%%%%%%%%%%%%%%%
\section{Proof of Theorem \ref{teoB}}
This section proves  Theorem B,
which establishes $h(\Phi,G_\mu(\Phi))\leq h_\mu(\Phi)$
for any $\Phi$-invariant measure.
We start defining the set $\fR(h_\mu(\Phi))$~as
$$
\fR(h_\mu(\Phi)):=\{x\in X: \exists\,\,\nu\in V_{\phi^1}(x)\mbox{ such that }h_\nu(\phi^1)\leq h_\mu(\Phi)\},
$$
where $V_{\phi^1}(x)$ is the set of accumulation points of the empirical measure $\xi_n(x)=\frac{1}{n}\sum_{j=0}^{n-1}\delta_{\phi^j(x)}$.

 Theorem \ref{teoA}  together with
 \cite[Theorem 2]{Bowen(1973)} give 
$$
h(\Phi,\fR(h_\mu(\Phi)))=h(\phi^1,\fR(h_\mu(\phi^1)))\leq h_\mu(\phi^1)=h_\mu(\Phi).
$$

\noindent Then by Proposition \ref{PropEntNon}(ii), we are left to  prove that  $G_\mu(\Phi)\subseteq \fR(h_\mu(\Phi))$.
By {Theorem} \ref{G=g}, each $x\in G_\mu(\Phi)$ induces a Borel probability $\mu_x$ invariant by the time-one map of the flow and such that $x\in G_{\mu_x}(\phi^1)$.
Denote $\mu^t_x:=\phi^t_*\mu_x$ and define $\overline{\mu}_x:=\int_0^1\mu^t_xdt$.

\noindent {\bf Claim.} $\mu=\overline{\mu}_x$. Indeed, for each  $\varphi\in C(X)$, we have that
\begin{eqnarray*}
\int\varphi d\mu  & = &  \lim_{T\rightarrow\infty}\frac{1}{T}\int_0^T(\varphi\circ\phi^t)(x)dt
= \lim_{T\rightarrow\infty}\frac{1}{T} \left(\sum_{j=0}^{[T]-1}\int_0^1(\varphi\circ\phi^t)(\phi^jx)dt + \int_{[T ]}^T \varphi(\phi^t(x))dt \right)=\\
&=& \int_0^1\lim_{T\rightarrow\infty}\sum_{j=0}^{[T]-1}(\varphi\circ\phi^t)(\phi^jx)dt
 = \int_0^1\int(\varphi\circ\phi^t)d\mu_xdt
 =  \int_0^1\int\varphi d\mu_x^tdt
=  \int\varphi d\overline{\mu}_x. \quad\quad \square
\end{eqnarray*}

Lema \ref{LemmaEntropy} {, together with the claim, implies} $h_{\mu_x}(\phi^1)= h_{\overline{\mu}_x}(\phi^1)=h_\mu(\phi^1)=h_\mu(\Phi)$ and so $x\in \fR(h_\mu(\Phi))$ by definition  of $\fR(h_\mu(\Phi))$, proving that $h(\Phi,G_\mu(\Phi))\leq h_\mu(\Phi)$.

{If $\mu\in \EF$, then $\mu(G_\mu(\Phi))=1$ and using \cite[Theorem 1]{Bowen(1973)}, we get that  $h_\mu(\phi^1)\leq h(\phi^1,G_\mu(\Phi))$, which implies, by Theorem \ref{teoA},  that $h_\mu(\Phi)\leq h(\Phi,G_\mu(\Phi))$ and the equality follows. The proof of Theorem \ref{teoB} is complete.}
$\square$

\begin{remark} If $\mu\in\mathfrak{M}_{\mathrm{erg}}(\Phi)$ and $t> 0$, then the set $G_\mu(\Phi)$ satisfies
$$
{\frac{1}{t}h_\mu(\phi^t)}=h_\mu(\phi^1)=h(\phi^1,G_\mu(\Phi)),
$$
even if $\mu$ is not an ergodic measure for $\phi^t$. This {means}  that we can exhibit a set whose topological entropy allows us to determine the metric entropy of the map $\phi^t$.
\end{remark}

\section{Saturated systems and proof of theorems \ref{teoC} and \ref{teoD}}
%%%%%%%%%%%%%%%%%%%%%%%%%%%%%%%%%%%%
%%%%%%%%%%%%%%%%%%%%%%%%%%%%%%%%%%%%%%

%The {purpose}  of 
This section  provides non-trivial examples of continuous dynamic systems so that {the inequality} (\ref{BowenIneqFlow}) {in Theorem \ref{teoB}}
{ is always}  an equality: the so-called \textit{saturated systems}. 
We need to impose some additional conditions on the system to do so.  {Hence},  we  define  the almost specification property for flows and prove a result establishing that a continuous flow defined {on a compact metric space} satisfies this property if and only if its {time-$t$ map  satisfies} the almost specification property {in the discrete sense}.
This result {allows} us  to obtain that if a flow $\Phi$ has the {\bf almost specification property}, then $\Phi$ is saturated (Theorem \ref{Quase-Spec}(b)), extending 
\cite{Meson-Vericat} to the context of flows.
In addition, this result also makes {it} possible to describe some non-trivial examples of saturated flows.

\begin{defi} A dynamical system $(X,S)$ is {\bf saturated} if
$$
h(S,G_\mu(S))=h_\mu(S)\qquad\mbox{ for each }\mu\in\mathfrak{M}(S).
$$
\end{defi}

All dynamical systems with null entropy are saturated. We say that $(X,S)$ is \textbf{strongly saturated} if it is saturated, and in addition, $G_\mu(S)$ is nonempty for each {$\mu\in\mathfrak{M}(S)$}. It is clear that whenever $X$ is not a {singleton}, then the identity map is saturated but not strongly saturated. On the other hand, uniquely ergodic systems are always strongly saturated. 
It is not difficult {to} construct non trivial examples of systems that are not saturated, and below, we give such {an} example.

\begin{ex}
Let $(X_1,f_1,\mu_1)$ and $(X_2,f_2,\mu_2)$ {be} two uniquely ergodic systems with entropy $h_1,\, h_2>0$, respectively (see for instance  \cite{Hahn-Katznelson(1967)}). Consider the dynamical system  $(X,f)$ defined by $X=X_1\sqcup X_2$  and  $f(x)=f_i(x)$ if $x\in X_i,\, i= 1, 2$. For $\mu=\frac{1}{2}(\mu_1+\mu_2)$, $G_\mu(f)=\emptyset$ but $h_\mu(f)=\frac{1}{2}(h_1+h_2)>0=h(f,G_\mu(f))$.
\end{ex}

Next, we prove Theorem \ref{teoC}, which gives a sufficient condition to determine if a continuous flow is saturated. This establishes that if the time-$t$ map of a continuous flow is saturated for some $t>0$, so is the flow.

\noindent {\bf{Proof of Theorem \ref{teoC}.}}\/ Let $\mu\in\MF$. By Proposition \ref{conten} and Theorem \ref{teoB} we get
$$
h_\mu(\phi^t)=h(\phi^t,G_\mu(\phi^t))=t\cdot h(\Phi,G_\mu(\phi^t))\leq t\cdot h(\Phi,G_\mu(\Phi))\leq t\cdot h_\mu(\Phi)=h_{\mu}(\phi^t).
$$
This implies that {$h_\mu(\Phi)=h(\Phi,G_\mu(\Phi))$}, which finishes the proof. $\quad \quad \square$

\begin{remark}\label{r-teoCnvale} \textnormal{(a)} If the time-1 map of a flow is saturated {(and  hence so is the flow)}, then for every set $Y$ such that $G_\mu(\phi^1)\subseteq Y\subseteq G_\mu(\Phi)$ it holds
$$
h_\mu(\phi^1)=h(\phi^1,Y)=h(\Phi,Y).
$$
\indent \textnormal{(b)} It is not hard to give examples such that the flow $\Phi$ is strongly saturated, but $\phi^1$ is not. One can consider a suspension flow with roof function $\rho\equiv1$ and over a uniquely ergodic base.\\
\indent \textnormal{(c)} It is not a simple task to detect when a dynamical system is saturated. In the following subsection we shall give some  such examples.
\end{remark}

The example below provides a class of systems that is never saturated.

\begin{ex}\label{p-naoerrante}
Let $\Phi$ be a continuous flow defined on a compact { Riemannian manifold} $M$. Assume that the nonwandering set of $\Phi$ can be written as a disjoint union $\Omega(\Phi)=\Omega_1\cup\Omega_2$, with $\Omega_1$ and $\Omega_2$  closed and $\Phi-$ invariant. Let $\mu_1,\mu_2\in\mathfrak{M}(\Phi)$ with $\supp(\mu_i)\subseteq\Omega_i$, $i=1,2$. Then  any measure defined as  $\mu=\lambda\mu_1+(1-\lambda)\mu_2$, $0<\lambda<1$ does not admit generic points. Moreover, if  $\Phi$  has positive entropy, it is not saturated.
\end{ex}
\begin{proof}
The proof goes by contradiction. Assume that there is
$x\in G_\mu(\Phi)$. Without loss {of generality}, we can suppose that  $\omega(x)\subseteq \Omega_1$. By  {Urysohn's lemma}, we consider a continuous map $\varphi:M\rightarrow\mathbb{R}$ such that
$$
\varphi(y)=\left\{
\begin{array}{rcl}
0 & \mbox{if} & y\in\Omega_1\\
1 & \mbox{if} & y\in\Omega_2.
             \end{array}
   \right.
$$

 \noindent {\bf{Claim.}}\/ $\varphi(\phi^t(x))\rightarrow0$ when $t\rightarrow\infty$.

\noindent {\em{Proof of the Claim.}} Assume that exists $(t_k)_{k\geq1}$ such that $\varphi(\phi^{t_k}(x))\rightarrow r\neq0$ when $k\rightarrow 0$. Taking a subsequence, if necessary, we can assume that  $(\phi^{t_k}(x))_{k\geq1}$ converges, say to $z_x$. Since $z_x\in\Omega_1$ and $\varphi$ is continuous,  we get
$$
r=\lim_{k\rightarrow\infty}\varphi(\phi^{t_k}(x))=\varphi(\lim_{k\rightarrow\infty}\phi^{t_k}(x))=\varphi(z_x)=0. \quad\quad \square
$$

The claim implies that
$$
\lim_{T\rightarrow\infty}\frac{1}{T}\int_0^T\varphi(\phi^t(x))dt=0,
$$
and since 
$$
\int_M\varphi d\mu\geq\int_{\Omega_2}\varphi d\mu=(1-\lambda)>0,
$$
we arrive at a contradiction to the definition of generic point.
Thus  $G_\mu(\Phi)=\emptyset$. 

Now, if the flow has positive entropy, we can choose $\mu_1$ with $h_{\mu_1}(\Phi)>0$ and so $h_\mu(\Phi)>0=h(\Phi,G_\mu(\Phi))$,  
proving that $\Phi$ is not saturated.
 \end{proof}

The above example includes all the non-transitive {Anosov flows}. In particular, by Theorem \ref{teoC}, the homeomorphism $\phi^1$ is also not saturated and  so $h(\phi,G_\mu(\phi^1))<h_\mu(\phi^1)$ for some $\phi^1$-invariant measure $\mu$. In contrast, we  see below that {topologically mixing} Anosov flows are strongly saturated.

\subsection{Almost specification for flows versus saturated systems}\label{sub-almost-sp}

This subsection aims to propose a definition of almost specification for continuous flows defined on compact metric spaces and establish a version for flows of the well known results by Meson-Vericat and  Thompson.\\
\indent In \cite{Pfister-Sullivan(2007)}, C.-E. Pfister and W. Sullivan introduced the notion of  $g$-almost product property to continuous functions.   D. Thompson \cite{Thompson(2012)} made a {slight}  modification in this concept to introduce the definition of almost specification property. 
We point out that the specification property implies the almost specification property, see \cite[Proposition 2.1]{Pfister-Sullivan(2007)} and \cite[p. 5397]{Thompson(2012)}).
 Still, there are many systems that satisfy the almost specification property which do not satisfy the specification property. For instance, {every $\beta$-shift $T_\beta$ has the almost specification property } \cite[Theorem 5.1]{Thompson(2012)}; however, the set of  $\beta\in(0,\infty)$ such that $T_\beta$ satisfies the specification property has zero Lebesgue measure (although it is dense with Hausdorff dimension equal to 1), see \cite[Theorem 1.5]{Buzzi(2005)} and \cite[Theorem A]{Schmeling(1997)}).

 To make the text self contained, we start recalling the notion of almost specification for discrete dynamical. The definition and some remarks on the specification property are outlined in the Appendix.

\subsubsection{Almost specification for discrete dynamical systems}\label{s-almost-discrete}

 We start defining the notion of  {\em{mistake function}}.
\begin{defi}
Let $\varepsilon_0>0$ fixed. A function $g:\mathbb{N}\times(0,\varepsilon_0]\rightarrow\mathbb{N}\cup \{0\}$ is a \textbf{mistake} if for every $\varepsilon\in(0,\varepsilon_0]$ the following holds:\\
\indent (A1) $g(n,\varepsilon)$ is non decreasing respect to $n$.\\
\indent (A2) $\displaystyle{\lim_{n\rightarrow\infty}\frac{g(n,\varepsilon)}{n}=0}$.
\end{defi}
We extend the function $g$ for every $\varepsilon>\varepsilon_0$ defining $g(n,\varepsilon):=g(n,\varepsilon_0)$.\\
\indent Let $f:X\rightarrow X$ {be a continuous map} and
 $\Lambda\subseteq\mathbb{N}\cup \{0\}$ be a non empty finite set.
We define the {pseudo-metric} on  $\Lambda$ induced by $f$ as
$$
d^f_\Lambda(x,y):=\max\{d(f^jx,f^jy):j\in\Lambda\}.
$$
A \textbf{Bowen ball} along $\Lambda$ at $x$ and radius $\varepsilon>0$ is given by
$$
{B^f_\Lambda(x,\varepsilon):=\{y\in X:d^f_\Lambda(x,y)<\varepsilon\}.}
$$
\indent If $n$ is a {positive integer} we denote   $\Lambda_n:=\{0,1,...,n-1\}$.\\
\indent Given a mistake function $g$, $n\in\mathbb{N}$ and $\varepsilon>0$ with $n>g(n,\varepsilon)$ we define
$$
I(g|n,\varepsilon):=\{\Lambda\subseteq\Lambda_n:\#\Lambda\geq n-g(n,\varepsilon)\}
$$
and
$$
B^f_n(g|x,\varepsilon):=\{y\in X:y\in B^f_\Lambda(x,\varepsilon)\mbox{ for some }\Lambda\in I(g|n,\varepsilon)\} =\bigcup_{\Lambda\in I(g|n,\varepsilon)}B^f_\Lambda(x,\varepsilon).
$$
\begin{defi}\label{Almost-defi} A continuous function $f:X\rightarrow X$ satisfies  the property of \textbf{almost {specification}} (ASP for short) if there  exists a mistake function $g$ satisfying the following:
Given real numbers $\varepsilon_1, \varepsilon_2,...,\varepsilon_k>0$, there are {integers}  $N_g(\varepsilon_i)$, $i=1,2,...,k$, such that for any $x_1, x_2,..., x_k\in X$ and  {integers} $n_i\geq N_g(\varepsilon_i)$, $i=1,2,...,k$, there is $z\in X$ such that
$$
{f^{N_{j}}(z)\in B^f_{n_j}(g|x_j,\varepsilon_j)\quad\mbox{for all }\, j=1,2,...,k,}
$$
where $n_0:=0$ {and} $N_j:=n_0+n_1+...+n_{j-1}$.
\end{defi}
It is easy to see  that this property is inherited by factors, invariant by conjugacy and does not depend on the choice of the metric on $X$. 
{
\begin{prop}\label{p-almost} If $f:X\rightarrow X$ is a homeomorphism with the almost specification property, then $f^{-1}$ also has the almost specification property.
\end{prop}
\begin{proof} Suppose $f$ has the almost specification property with mistake function $g$. Let $\varepsilon_1, \varepsilon_2,...,\varepsilon_k>0$ and $x_1, x_2,..., x_k\in X$.  Let $N_g(\varepsilon_i)$, $i=1,2,...,k$ as in Definition \ref{Almost-defi} and let $n_i>N_g(\varepsilon_i)$ be integers. By the almost specification property, there is a point $\overline{z}\in X$ such that 
$$
f^{\overline{N}_{j}}(\overline{z})\in B^f_{n_{k+1-j}}(g|f^{-n_{k+1-j}}(x_{k+1-j}),\varepsilon_{k+1-j})\quad\mbox{for all }\, j=1,2,...,k,
$$
where $n_0=0$ and $\overline{N}_j:=n_k+n_{k-1}+...+n_{k+1-j}$.\\
Now, for $z=f^{n_1+n_2+\cdot+n_k}(\overline{z})$ we see that
$$
f^{-N_{j}}(z)\in B^{f^{-1}}_{n_j}(g|x_j,\varepsilon_j)\quad\mbox{for all }\, j=1,2,...,k,
$$
where $n_0:=0$ and $N_j:=n_0+n_1+...+n_{j-1}$. This finishes the proof.
\end{proof}}

Despite  its rather complicated appearance, the almost specification property 
become a handy tool in topological and ergodic {theory}. 
For instance, it was shown in \cite[Theorem 3.5]{Kulczycki-Kwietniak-Oprocha(2014)} that whenever the system is surjective, this property implies the \textit{asymptotic average shadowing property} (AASP for short) which, in turn implies chain mixing (\cite[Theorem 3.1]{Kulczycki-Oprocha(2011)}). In \cite[Theorem 5.1]{Kulczycki-Kwietniak-Oprocha(2014)}, it was proved that $f$ has the almost specification property provided $f$ restricted to its \textit{measure center} (the closure of the union of all supports of f-invariant measures) has that property. On the other hand, it is not hard to see that the almost specification property implies the \text{approximate product property} introduced by Pfister and Sullivan in \cite{Pfister-Sullivan(2005)}, and  {the space of ergodic measures} of a system satisfying the almost specification property is entropy dense {(\cite[Theorem 2.1]{Pfister-Sullivan(2005)})}. Besides, \cite[Corollary 22]{Kwietniak-Lacka-Oprocha(2017)}
(see also \cite{Dong-Tian-Yuan(2015)}) allows one to conclude that if $f$ has the almost specification property, then $G_\mu(f)$ is nonempty for every invariant measure $\mu$.
In connection with the theory of dimension, it was proved by Pfister and Sullivan \cite[Theorem 6.1]{Pfister-Sullivan(2007)} that systems satisfying the property of $g$-almost product are saturated and then A. Meson and F. Vericat at \cite{Meson-Vericat} extend this result for systems satisfying the almost specification property. 
In \cite[Theorem 1.1]{Fan-Liao-Peyriere}, the authors prove
 that systems satisfying the specification property are saturated. 
 
 \subsubsection{Almost specification for continuous dynamical systems}\label{s-almost-cont}

\indent {Let $\mathbb{R}^+$ be the set of positive real numbers and let
$\mathbb{R}_0^+$ be the set of nonnegative real numbers.} We denote by $\mathcal{L}(\mathbb{R}_0^+)$ the Lebesgue $\sigma-$algebra and by $\lambda$ the Lebesgue measure.

\begin{defi}
Let $\varepsilon_0>0$ fixed. A function $g:\mathbb{R}^+\times(0,\varepsilon_0]\rightarrow\mathbb{R}^+$ is called {a} {\bf mistake function} if for all $\varepsilon\in(0,\varepsilon_0]$ it holds:
\begin{enumerate}
\item[(A1)] $g(\cdot,\varepsilon)$ is continuous and non-decreasing.
\item[(A2)] $\displaystyle{\lim_{t\rightarrow\infty}\frac{g(t,\varepsilon)}{t}=0}$.
\end{enumerate}
\end{defi}
For $\varepsilon>\varepsilon_0$ define $g(t,\varepsilon)=g(t,\varepsilon_0)$ for all $t\in\mathbb{R}^+$.
Let {$\Lambda\subseteq \Lambda_T :=[0,T]$ be} a Lebesgue set. Put
$$
d^{\Phi}_\Lambda(x,y)=\sup\{d(\phi^tx,\phi^ty):t\in \Lambda\}\qquad\mbox{and}\qquad B^{\Phi}_\Lambda(x,\varepsilon)=\{y\in X:d^\Phi_\Lambda(x,y)<\varepsilon\}.
$$
\indent If $g$ is a mistake function, we define
\begin{eqnarray*}
B^{\Phi}_T(g|x,\varepsilon) & = & \{y\in X:\mbox{ there is } \Lambda\in\mathcal{L}(\Lambda_T)\mbox{ with } \lambda(\Lambda_T\setminus \Lambda)\leq g(T,\varepsilon)\mbox{ and } d^{\Phi}_\Lambda(x,y)<\varepsilon \}\\
& = & \bigcup\{B^\Phi_\Lambda(x,\varepsilon):\Lambda\mbox{ is a }\lambda-\mbox{measurable set with } \lambda(\Lambda_T\setminus \Lambda)\leq g(T,\varepsilon)\}.
\end{eqnarray*}

\begin{defi}\label{Defi-ASP} A flow $\Phi:X\times\mathbb{R}\rightarrow X$ has the {\bf almost specification property} (ASP for short) if there is a mistake function $g$ such that for any {positive integer} $k$, any real numbers $\varepsilon_1, \varepsilon_2,...,\varepsilon_k>0$, there are
positive numbers
$T_g(\varepsilon_1), T_g(\varepsilon_2),..., T_g(\varepsilon_k)$ such that for each $x_1, x_2,...,x_k\in X$ and times $t_i\geq T_g(\varepsilon_i)$, $i=1,2,...,k$, there exists $z\in X$ satisfying
$$
\phi^{T_{j}}(z)\in B^{\Phi}_{t_j}(g|x_j,\varepsilon_j)\qquad\mbox{for all }\quad j=1,2,...,k
$$
where $t_0=0$ and $T_j=t_0+t_1+t_2+...+t_{j-1}$.
\end{defi}

It is easy to see that this definition is invariant by conjugation,  so it does not depend on the choice of metric $d$ on $X$. 
We look at its discrete systems $\phi^t$ to establish  interesting results for 
{a flow $\Phi$} with the almost specification property
%To establish some interesting results for {a flow $\Phi$} with the almost specification property, we look at its discrete systems $\phi^t$. 

\noindent {\bf{Proof of Theorem \ref{teoD}.}}
{By Proposition \ref{p-almost} it is enough  to prove for $t>0$, 
and there is no loss of generality {to} consider $t=1$.}
%It is suffices to prove for $t=1$.
{Given $\epsilon > 0$ we pick a fixed $\bar\epsilon >0 $ such that $d(\phi^{t}x, \phi^{t}y) < \epsilon$ for all $x,\, y \in X$ and for every $t \in [0,1]$ whenever $d(x,y) < \bar\epsilon$.}
 Assume that $\Phi=\{\phi^t\}_{t\in\R}$ has ASP with mistake function $\overline{g}:\R^+\times\R^+\rightarrow\R_0^+$. Consider the mistake function $g:\N\times\N\rightarrow\N_0$ defined by $g(n,\varepsilon)=[\overline{g}(n-1,\overline{\varepsilon})]+2$ for any $ n> 1$, $g(1,\epsilon)=g(2,\epsilon)$.
We shall prove that $\phi^1$ has  ASP with mistake function $g$.\\
\indent Let $\varepsilon_1,\varepsilon_2,...,\varepsilon_k>0$ and consider $N_g(\varepsilon_i)=[T_{\overline{g}}(\overline{\varepsilon}_i)]+2$ for $i=1,2,...,k$, where
$T_{\overline{g}}(\overline{\varepsilon}_i)$ is as {in} Definition \ref{Defi-ASP}.\\
\indent Let $x_1,x_2,...,x_k\in X$ and $n_i>N_g(\varepsilon_i)$, $i=1,2,...,k$ {be} integers. Since $\Phi$ has ASP, there is $z\in X$ such that

%\begin{equation}\label{ASP-F2}
$$
\phi^{T_j}(z)\in B^{\Phi}_{n_j}(\overline{g}|x_j,\overline{\varepsilon}_j)\quad\mbox{for  }j=1,2,...,k,
$$
%\end{equation}
where $n_0=0$ and $T_j=n_0+n_1+n_2+...+n_{j-1}$.\\
\indent For $j=1$, let $\Lambda\subseteq[0,n_1]$ be a Borel set satisfying $\lambda([0,n_1]\setminus\Lambda)\leq \overline{g}(n_1,\overline{\varepsilon}_1)$ and $d_\Lambda^\Phi(x,z)<\overline{\varepsilon}_1$. Denote  {$\Omega_i=(i-1,i]\cap\Lambda^c$}, $i=1,2,...,n_1-1$ and set
$$
\Omega=\{i\in\{0,1,...,n_1-1\}:d(\phi^{i}x_1,\phi^{i}z)>\varepsilon_1\}.
$$
Clearly, if $i>0$ is such that $i \in \Omega$, then $\Omega_i=(i-1,i]$, and hence
\begin{eqnarray*}
\#{\Omega}\leq\#\{i\in\{1,2,...,n_1-1\}:\Omega_i=(i-1,i]\}+1 & \leq & [\lambda([0,n_1-1]\setminus\Lambda)]+2\\
& \leq &  [\overline{g}(\overline{\varepsilon_1},n_1-1)]+2\\
& = & g(\varepsilon_1,n_1).
\end{eqnarray*}
Thus
$
z\in B^{\phi^1}_{n_1}(g|x_1\varepsilon_1).
$
The same argument works for $j=2,3,...,k$ and thus
$$
\phi^{T_j}(z)\in B^{\phi^1}_{n_j}(g|x_j,\varepsilon_j)
$$
where $n_0=0$ and $T_j=n_0+n_1+n_2+...+n_{j-1}$, $j=1,2,...,k$, proving that $\phi^1$ has ASP with mistake function $g$.\\
\indent The converse is similar.
Assuma that   $\phi^1:X\rightarrow X$ has ASP with mistake function $\overline{g}$.
We shall prove that the flow  $\Phi$ has ASP with mistake function
 defined by
$$
g(t,\varepsilon)=(\overline{g}(n+1,\overline{\varepsilon})-\overline{g}(n,\overline{\varepsilon}))(t-n+1)+\overline{g}(n,\overline{\varepsilon})+2
$$
where $n-1< t\leq n$, $n\geq1$.\\
\indent To this end, let $\varepsilon_1,\varepsilon_2,...,\varepsilon_k>0$ and $T_g(\varepsilon_j)=N_{\overline{g}}(\overline{\varepsilon}_j)+2$, $j=1,2,...,k$.
Let $x_1, x_2,...,x_k\in X$ {and} $t_1>T_g(\varepsilon_1), t_2>T_g(\varepsilon_2),...,t_k>T_g(\varepsilon_k)$ {be} positive real numbers.\\
\indent Put $n_1=[t_1]+1$ and $n_j=[t_j-1+(t_{j-1}-[t_{j-1}])]+1$ for $j\geq2$.
Clearly $n_j>N_{\overline{g}}(\overline{\varepsilon}_j)$ for all $j=1,2,...,k$.
Put also  $\overline{x}_1=x_1$ and $\overline{x}_j=\phi^{1-(t_{j-1}-[t_{j-1}])}(x_j)$, $j\geq2$.\\
\indent Since  $\phi^1$ has ASP, there exists $z\in X$ such that
$$
\phi^{N_j}(z)\in B^{\phi^1}_{n_j}(\overline{g}|\overline{x}_j,\overline{\varepsilon}_j)
$$
where $n_0=0$ and $N_j=n_0+n_1+n_2+...+n_{j-1}$, $j=1,2,...,k$.\\
{\indent Let $\overline{\Lambda}_j\subseteq\{0,1,...,n_j-1\}$  be a set satisfying $\#(\{0,1,...,n_j-1\}\setminus\overline{\Lambda}_j)\leq \overline{g}(n_j,\overline{\varepsilon}_j)$ and $d_{\overline{\Lambda}_j}^{\phi^1}(x,z)<\overline{\varepsilon}_j$.}

From the choice of  $\overline{\varepsilon}_j$ it follows that if $i\in\overline{\Lambda}_j$, $i\neq n_j-1$, then $d(\phi^{N_j+i+t}z,\phi^{i+t}\overline{x}_j)<\varepsilon$ for every $t\in[0,1]$.\\
\indent Letting  $\Lambda_j=\cup\{[i,i+1]:i\in\overline{\Lambda}_j, i\neq n_j-1\}$ we have that
\begin{eqnarray*}
\lambda([0,t_j]\setminus\Lambda_j) & \leq & \#\{0,1,...,n_j-1\}\setminus\overline{\Lambda}_j+2\\
& \leq & \overline{g}(n_j,\overline{\varepsilon})\\
& \leq &  g(t_j,\varepsilon).
\end{eqnarray*}
Thus,
$$
\phi^{T_j}(z)\in B^{\Phi}_{n_j}(g|x_j,\varepsilon_j),
$$
where $t_0=0$ and $T_j=n_0+t_1+t_2+...+t_{j-1}$, $j=1,2,...,k$. {This concludes that $\Phi$ has ASP with mistake function $g$ and concludes the proof.} $\square$

 As mentioned before,  the specification property implies the almost specification property for discrete time systems.
 %, see \cite[Proposition 2.1]{Pfister-Sullivan(2007)}  and \cite[p. 5397]{Thompson(2012)}. 
 We deduce the same result  for flows, as a corollary of Theorem \ref{Phi=>phi1-Esp}  in the Appendix. 
 
\begin{cor}\label{Fspecific-ASP} If $(X,\Phi)$ has the specification property,  it has the almost specification property.
\end{cor}
\begin{proof} 
Theorem \ref{Phi=>phi1-Esp} ensures that  if $\Phi$ has the specification property 
then 
$\phi^1$ has this property.
 Hence, by \cite[Proposition 2.1]{Pfister-Sullivan(2007)},   $\phi^1$ has the almost specification property. The result follows from Theorem {\ref{teoD}}.
\end{proof}

It is known that every topologically mixing Anosov flow on a closed manifold has the specification property (see \cite[Theorem 18.3.14]{Katok-Hasselblatt}), so the class of  continuous time systems satisfying the almost specification property includes such flows. In particular, every geodesic flow on a closed manifold with negative curvature is in this class. 
In Example \ref{ASPnotSecific}, we shall see that the converse of Corollary \ref{Fspecific-ASP} does not hold in general.

\subsubsection{Consequences of almost specification property}\label{s-conseq}

Theorem {\ref{teoD}} {allows} us to obtain several consequences of the almost specification property for flows.
In the sequel, we list some of them.
%from discrete case.

\begin{teo}\label{Quase-Spec} Suppose that $(X,\Phi)$ has the almost specification property, then the following statements hold:\\
\indent \textnormal{(a)} $G_\mu(\Phi)\neq\emptyset$ for every $\mu\in\mathfrak{M}(\Phi)$.\\
\indent \textnormal{(b)} $\Phi$ is strongly saturated.\\
\indent \textnormal{(c)} $\EF$ is entropy dense. In particular, $\EF$ is dense in $\MF$.\\
{\indent \textnormal{(c)} $G_\mu(\Phi)$ is dense in the support of $\mu$.}
\end{teo}
\begin{proof} (a) Follows from \cite[Corollary 1.5]{Dong-Tian-Yuan(2015)}, Theorem {\ref{teoD}} and Remark \ref{Gnonempty}.\\
\indent \textnormal{(b)} Follows from Theorems \ref{teoC}, {\ref{teoD}} and \cite{Meson-Vericat}.\\
\indent \textnormal{(c)} It an immediate consequence from Theorem {\ref{teoD}}, Theorem \ref{Dense-Entropy} and \cite[Theorem 2.1]{Pfister-Sullivan(2005)}.\\
{\indent \textnormal{(d)} By theorem {\ref{teoD}}, the time-1 map has the almost specification property and  by \cite[Corollary 1.5]{Dong-Tian-Yuan(2015)}, $G_\mu(\phi^1)$ is dense in the support of $\mu$. The conclusion follows from Proposition \ref{conten}.}
\end{proof}

We note that item (b) from the theorem above is a version for flows of \cite{Meson-Vericat}. 

As an application of Theorem \ref{Quase-Spec},  we have the following

\begin{cor} Every topologically mixing Anosov flow on a closed manifold is strongly saturated. In particular, geodesic flows on a closed manifold of negative curvature are strongly saturated.
\end{cor}

In connection with topological dynamics, it is known that almost specification property implies chain mixing for surjective systems (see, for instance, \cite[Lemma 3.2]{Kulczycki-Kwietniak-Oprocha(2014)}). 
The same is true for continuous dynamical systems, as we shall prove below.
%The same is true for continuous dynamical systems, {as we shall prove below.}

%For $\varepsilon>0, T>0$, a finite sequence $\{x_i\}_{i=0}^n$ is called an {\em $(\varepsilon, T)$-chain} if there exists $\{t_i\}_{i=0}^{n-1}$ such that $t_i>T$ and $d(\phi_{t_i}(x_i),x_{i+1})<\varepsilon$  for all $i=0,\ldots, n-1$. We say that $y$ is {\em chain attainable from $x$}, if there exists $T>0$ such that for all $\varepsilon>0$, there exists an $(\varepsilon, T)$-chain $\{x_i\}_{i=0}^n$ with $x_0=x$ and $x_n=y$.

Let $\delta,T>0$ and $x,y\in X$. A sequence $\{(x_i,t_i)\}_{i=0}^{n}$ in $X\times\mathbb{R}^+$ is is a $(\delta,T)$-{\bf chain} (or $(\delta,T)$-{\bf pseudo orbit}) from $x$ to $y$, if $x_0=x$,  $x_{n+1}=y$,  $t_i\geq T$  and $d(\phi^{t_i}(x_i),x_{i+1})<\delta$ for all $0\leq i\leq n$. In this case, $n$ is the {\it length} of the chain.

\begin{defi} The system $(X,\Phi)$ is {\bf chain mixing} if, for any $x,y\in X$ and any $\delta,T>0$, there is a positive integer $N=N(x,y, \delta,T)$ such that for each $n\geq N$, there exists a $(\delta,T)$-chain of length $n$ from $x$ to $y$. 
%If  $N$ does not depend on $x$ and $y$, the system is called {\bf uniformly chain mixing}.
\end{defi}

\begin{remark}\label{Chains} Concerning the topological notions of mixing, {transitivity} and chain mixing,
we have that the following relations hold to discrete as well to continuous dynamical systems:
\begin{enumerate}
\item[(a)] Topological transitivity implies chain transitivity,
\item[(b)] Topological mixing implies chain mixing,
%\item[(c)] Uniformly chain mixing implies chain mixing.
\end{enumerate}
\end{remark}

%{
%\begin{remark}\label{Chains} The following statements hold to continuous as well discrete  dynamical systems:\\
%\indent (a) Topological transitivity implies chain transitivity.\\
%\indent (b) Topological mixing implies chain mixing.\\
%\indent (c) Chain mixing implies topological mixing.
%\end{remark}}

\begin{prop}\label{ChainMixing} If $\Phi$ has the almost specification property,  it is 
chain mixing.
\end{prop}
\begin{proof} Let $g$ be a mistake function for $\Phi$.
{ Given  $x, y \in X$,  fix  $\delta, T>0$,   $N=4$ and let $n \geq N$. }
Let $T_g(\delta)$ as in {Definition \ref{Defi-ASP}}.  Pick $m\in\N$ such that $m_0=3nmT\geq T_g(\delta)$ and $g(m_0,\delta)/m_0<1/3$. Then $g(m_0,\delta)<nmT$ and hence there exist $nmT<\overline{t}_1,\overline{t}_2<2nmT$ and $z\in X$ such that $d(\phi^{\overline{t}_1}(x),\phi^{\overline{t}_1}(z))<\delta$ and $d(\phi^{\overline{t}_2}(\phi^{-m_0}y),\phi^{\overline{t}_2+m_0}(z))<\delta$. Note that $M=m_0+\overline{t}_2-\overline{t}_1>2nmT>(n-2)T$, so $\ell=M/(n-2)>T$. {Define}
$$
\begin{array}{rcl}
(x_0,t_0) & = & (x,\overline{t}_1)\\
(x_1,t_1) & = & (\phi^{\overline{t}_1}(z),\ell)\\
\vdots & & \vdots\\
(x_{n-1},t_{n-1}) & = & (\phi^{m_0+\overline{t}_2-\ell}(z),\ell)\\
(x_n,t_n) & = & (\phi^{-(m_0-\overline{t}_2)}(y),m_0-\overline{t}_2).
\end{array}
$$
By construction, $\{(x_i,t_i)\}_{i=0}^n$ is the desired $(\delta,T)$-chain from $x$ to $y$ of length $n$.
\end{proof}
\begin{remark} The constant $N=4$ is {independent of $x$, $y$, $\delta$, and $T$.}
\end{remark}

\subsection{Examples}

A usual way to construct examples (or counterexamples) is {through} the
 suspension flows. {Given a homeomorphism
$f:X\rightarrow X$ and a continuous function $\rho:X\rightarrow(0,\infty)$, we recall the definition of the suspension flow of $f$ with roof function $\rho$. To this end, let}
$$
X_{f,\rho}:=\{(x,t)\in X\times\mathbb{R}:0\leq t\leq\rho(x)\}
$$
where, for all  $x\in X$, the points $(x,\rho(x))$ and $(f(x),0)$ are identified.
As in \cite[p. 186]{Bowen-Walters(1972)}, it is possible to induce a topology and a metric
$d_{f,\rho}$ on $X_{f,\rho}$, and as $(X,d)$ is compact, so is $(X_{f,\rho},d_{f,\rho})$.
The space $(X_{f,\rho},d_{f,\rho})$ is called {\em{a suspension space}} of $X$ with roof function
$\rho$, and the metric $d_{f,\rho}$ sometimes is called  \textbf{ Bowen-Walters metric}.
The suspension flow $\Phi_f:X_{f,\rho}\times\mathbb{R}\rightarrow X_{f,\rho}$ of $f$
{with roof function $\rho$} is defined by
$$
\phi_{f,\rho}^t(x,s):=(f^nx,s+t-\rho^n(x))
$$
where $n$ is the unique integer such that $\rho^n(x)\leq s+t<\rho^{n+1}(x)$ and $\rho^n(x)$
is defined recursively by
$\rho^0(x)=0$ and $\rho^{n+1}(x)=\rho^n(x)+\rho(f^n(x))$, $n\in\mathbb{Z}$.\\
When $\rho\equiv1$, we denote $\Phi_f:X_f\times\mathbb{R}\rightarrow X_f$ the suspension
{of}  $f$ and by $d_f$ the metric of Bowen-Walters.

\begin{ex}\label{suspension} Let $f:X \to X$ be a homeomorphism defined on a compact metric space $X$. The flow $\Phi_f:X_f\times\mathbb{R}\rightarrow X_f$ does not satisfy the ASP. 
In particular,
the {time-$1$ map of the} suspension flow does not have the ASP.
\end{ex}
\begin{proof}  Let  $g:\mathbb{R}^+_0\times(0,\varepsilon)\rightarrow\mathbb{R}^+_0$
{be} any mistake function.
Let $w_1=(x_1,s_1), w_2=(x_2,s_2)\in X_f$ with $3/4<s_2<1$ and $\varepsilon_1,\varepsilon_2<1/4$. For any $T_1>0$, there is $t_1 >T_1$ such that ${1/4<\lfloor t_1+s_1\rfloor <1/2}$, where $\lfloor t \rfloor$ is the fractionary part of $t$. 
Let  $T_2>0$ be any real number. If $z=(x,t)\in X_f$ is so that $z\in B_{t_1}(g|w_1,\varepsilon_1)$, then $d(\phi_f^t(z),\phi_f^t(w_2))>1/4$ for all $t\in\mathbb{R}$, implying that $z\not\in \phi_f^{-t_1}B_{t_2}(g|w_2,\varepsilon_2)$ for all $t_2 > T_2$, and hence
$$
B_{t_1}(g|w_1,\varepsilon_1)\cap \phi_f^{-t_1}B_{t_2}(g|w_2,\varepsilon_2)=\emptyset.
$$
Consequently, $\Phi_f$ has not ASP. The second assertion is a consequence of Theorem \ref{teoD}.
\end{proof}

As shown in Example \ref{suspension}, any suspension flow with roof function $\rho\equiv1$
and its time-$1$ map  {do not have} the almost specification property. {In contrast,   }  in the following example, we show that {the suspension flow } and its time-$1$ map {are}  chain mixing if the base $f$ is chain mixing. Consequently, the converse implications in \cite[Lemma 3.2]{Kulczycki-Kwietniak-Oprocha(2014)} and Proposition \ref{ChainMixing} {do}  not hold.

{
\begin{lemma}\label{L-delta} Let $f:X\rightarrow X$ be a homeomorphism on a compact metric space. Let $\delta>0$ and $\ell$ be an integer; then there is a $\overline{\delta}=\overline{\delta}(\delta,\ell)>0$ such that any $\overline{\delta}$-chain of length $\ell$ is $\delta$-shadowed by its first element. 
\end{lemma}
\begin{proof} 
We define $0<\delta_1<\cdots<\delta_{\ell+1}$ recursively as follows:
%Let $0<\delta_1<\cdots<\delta_{\ell+1}$ defined recursively as follows: 
put $\delta_{\ell+1}=\delta$, and if $\delta_{j+1}$ is defined by compactness, we take $\delta_j$ such that  if $ d(x,y)<\delta_j$, then $d(f(x),f(y))<\delta_{j+1}/2$.
Taking $\overline{\delta}=\delta_1$, we finish the proof. 
\end{proof}} 

\begin{ex} Let $f:X\rightarrow X$ be a {chain mixing} homeomorphism and consider $(X_f,\Phi_f)$ the suspension flow over $f$ with roof function $\rho\equiv1$. Then\\
\indent (1) $(X_f,\phi^1_f)$ is chain mixing.\\
\indent (2) $(X_f,\Phi_f)$ is chain mixing.
\end{ex}
\begin{proof} {Let $\delta>0$ and $(x,{t_a}), (y,t_b)$ be two points in $X_f$.  Without loss of generality,  we can assume  $0\leq t_b<t_a<1$. Pick $N=N(x,y,\delta)$ {given by the}  chain mixing property of $f$.}\\
\indent To prove (1), choose $M=M(x,y,{\delta})>2/\delta$ and put $k$ the {smallest} integer
 such that $t_a -\delta{k}/2<t_b$. Consider the following $\delta$-chain on $X_f$:
$$
(x,t_a), (f^1(x),t_a-\delta/2 ), (f^2(x),t_a-\delta),\cdots (f^k(x),t_a-k\delta/2).
$$
From the chain mixing of $f$, for any $n\geq N$ there is a $\delta$-chain of length $n$, $\{(f^{i+k}(x),t_b)\}_{i=0}^n$, from $(f^k(x),t_b)$ to $(y,t_b)$. It is clear that
$$
(x,t_a), (f^1(x),t_a-\delta/2 ), (f^2(x),t_a-\delta),\cdots (f^k(x),t_a-k\delta/2), (f^k(x),t_b),..., (f^{n+k}(x),t_b)
$$
is a $\delta$-chain of length $k+n$ from $(x,t_a)$ to $(y,t_b)$. Since $k<M$ and by the hypothesis on $f$, { we conclude that } $(X_f,\phi^1_f)$ is chain mixing taking $N_1(x,y,\delta)=N+M$.

\indent To prove (2), let $T>0$ and fix an integer $\ell\geq T+1$. Let $s>0$ such that $\phi^s_f(x,t_a)=(x_s,t_b)$ for some $x_s\in X$. { Let $\delta>0$ and $\overline{\delta}$ be as in Lemma \ref{L-delta}}. Let $\overline{N}=\overline{N}((x,t_a),(y,t_b),\delta,T)$ be a positive integer with $\overline{N}\ell\geq N(x,y,\overline{\delta})$, where $N(x,y,\overline{\delta})$ is given by the chain mixing's property of $f$. Then there is a $\overline{\delta}$-pseudo orbit (with respect to $\phi^1_f$) of length $\overline{N}\ell$ from $(x_s,t_b)$ to $(y,t_b)$, say $(x_s,t_b)$, $(x_1,t_b)$,..., $(x_{\overline{N}\ell},t_b)$.
By the choice of $\delta, \overline{\delta}$ and $\ell$
$$
(x,t_a,\ell+s), (x_{\ell},t_b,\ell), (x_{\ell+1},t_b,\ell), \cdots (x_{(\overline{N}-1)\ell+1},t_b,\ell)
$$
is a $(\delta,T)$-chain of length $\overline{N}$ (with respect to $\Phi_f$) from $(x,t_a)$ to $(y,t_b)$. For $n\geq \overline{N}$, put $n=\overline{N}+k$ and $s>k\ell$. Then
$$
(x,t_a,\ell), (f^\ell(x),t_a,\ell),\cdots, (f^{(k-1)\ell}(x),t_a,s-(k-1)\ell), (x_{\ell},t_b,\ell), (x_{\ell+1},t_b,\ell), \cdots (x_{(\overline{N}-1)\ell+1},t_b,\ell)
$$
is a $(\delta,T)$-chain of length $n$ (with respect to $\Phi_f$) from $(x,t_a)$ to $(y,t_b)$. Therefore $\Phi_f$ is chain mixing.
\end{proof}

{Note that  }  neither $(X_f,\Phi_f)$ nor $(X_f,\phi^1_f)$ has the almost specification property, {as shown  in Example~\ref{suspension}}.

In Theorem \ref{Fspecific-ASP}, we have seen that specification implies almost specification. The following example shows that the converse is not valid, so the definition of almost specification considered {here}  is more general {than}
 the specification property.

\begin{ex}\label{ASPnotSecific} Consider $X=\mathbb{S}^1$ and define $\Phi(e^{2\pi{i}x},t)=e^{2\pi{ix^{2^{-t}}}}$, where $x\in[0,1]$. The system $(X,\Phi)$ has the almost specification property.
\end{ex}
\begin{proof}
Since $\lim_{|t|\rightarrow\infty}\Phi(e^{2\pi{i}x},t)=1$ for every $e^{2\pi{i}x}\in \mathbb{S}^1$ then for each $\varepsilon>0$ we can take $N(\varepsilon)>0$ such that $d(\phi^t(e^{2\pi{i}\varepsilon}),1)<\varepsilon$ for all $t>N(\varepsilon)$. If we define $g(\varepsilon,t)=N(\varepsilon)$ for all $\varepsilon>0$ and $t>0$, $g$ is a mistake function.\\
\indent We { claim} that the system $(X,\Phi)$ has the almost specification property with mistake function $g$.
In fact,  {since} every $z_1=e^{2\pi{i}x}\in \mathbb{S}^1$, the point $z=1$ satisfies {that} for any $t>0$, the set
$\{s\geq0:d(\phi^s(e^{2\pi{i}x}),1)\geq\varepsilon\}\subseteq [0,N(\varepsilon)]$, so $\lambda\{s\geq0:d(\phi^s(e^{2\pi{i}\varepsilon}),1)>\varepsilon\}\leq N(\varepsilon)=g(\varepsilon, t)$.
\end{proof}

\indent It is not hard to see that the above example does not have  the specification property. 

 To finish this section, we point out that Proposition \ref{ChainMixing} implies that the map described in Example \ref{ASPnotSecific} is chain mixing; it is neither topologically mixing nor  transitive. Thus the converse of Remark \ref{Chains}(a) and  (b) are not valid.
In particular, we obtain that the almost specification property does not imply topologically transitivity
{nor} topologically mixing.

\subsection{Irregular points versus saturated systems}

This section aims to prove a version of  a result of Thompson for flows (\cite[Theorem 4.1]{Thompson(2012)}).

\begin{defi} Let $\varphi\in C(X)$. A point $x\in X$ is $\varphi$-{\bf irregular} to $f:X\to X $ if its  Birkhoff averages {do not} converge.
\end{defi}
Denote  $\hat{X}(f,\varphi)$ the set of all $\varphi-$irregular points to $f$.
 Birkhoff's Ergodic Theorem implies $\mu(\hat{X}(f,\varphi))=0$ for all invariant measures $\mu$. {Despite} this apparent disadvantage, it can be topologically big (see, for instance, \cite{Ercai-Kupper-Lin}). The corresponding notion for a flow is entirely analogous, and the $\varphi$-{\bf irregular} points for a flow $\Phi$ are denoted by $\hat{X}(\Phi,\varphi)$.

\begin{lemma}\label{IF=If} Let $\Phi$ be a continuous flow defined on a compact metric space $X$. Then
$$
\hat{X}(\Phi,\varphi)=\hat{X}(\phi^1,\overline{\varphi})
$$
where $\overline{\varphi}(x):=\int_0^1\varphi(\phi^t(x))dt$.
\end{lemma}
\begin{proof}
It is clear that $\displaystyle{\lim_{T\rightarrow\infty}\frac{1}{T}\int_0^T\varphi(\phi^t(x))dt}$ converges if and only if $\displaystyle{\lim_{n\rightarrow\infty}\frac{1}{n}\sum_{j=0}^{n-1}\overline{\varphi}(\phi^j(x))}$ converges.
\end{proof}

The following result {gives}  that if $\Phi$ is a continuous flow with almost specification property, then $\hat{X}(\Phi,\varphi)$ is empty or carries total entropy, extending a result due to Thompson \cite[Theorem 4.1]{Thompson(2012)}.

\begin{teo}\label{ThompsonFlow} Let $\Phi$ be a continuous flow satisfying the almost specification property and $\varphi:X\rightarrow\mathbb{R}$ be a continuous function. If $\hat{X}(\Phi,\varphi)$ is not empty, then
$$
h(\Phi,\hat{X}(\Phi,\varphi))=h(\Phi).
$$
\end{teo}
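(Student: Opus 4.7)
The plan is to reduce Theorem \ref{ThompsonFlow} to Thompson's discrete-time dichotomy applied to the time-one map $\phi^1$, using Theorem \ref{teoA} to transfer Bowen entropies between the flow and its time-one map. The crucial observation is that the flow's irregular set for $\varphi$ coincides \emph{exactly} with the irregular set of $\phi^1$ for the averaged observable
$$\psi(x)\mathrel{\mathop:}=\int_0^1 \varphi(\phi^s x)\,ds,$$
which is continuous by uniform continuity of $(s,x)\mapsto\varphi(\phi^s x)$ on the compact set $[0,1]\times X$.

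Concretely I would proceed in four steps. First, verify the identity $I_\varphi(\Phi)=I_\psi(\phi^1)$: the change of variables $s=j+u$ gives $\tfrac{1}{n}\int_0^n\varphi(\phi^s x)\,ds=\tfrac{1}{n}\sum_{j=0}^{n-1}\psi(\phi^j x)$ for every integer $n\geq 1$, and the elementary bound $|\tfrac{1}{T}\int_0^T\varphi(\phi^s x)\,ds-\tfrac{1}{\lfloor T\rfloor}\int_0^{\lfloor T\rfloor}\varphi(\phi^s x)\,ds|=O(\|\varphi\|_\infty/\lfloor T\rfloor)$ shows that continuous-time convergence of the Birkhoff average is equivalent to convergence along the integers. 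Second, recall that the almost specification property for $\Phi$ is equivalent to that for $\phi^1$, the same equivalence invoked in the proof of Corollary \ref{ASP-Saturated}. Third, because $I_\varphi(\Phi)$ is nonempty by hypothesis, the first step forces $I_\psi(\phi^1)\neq\emptyset$, so Thompson's dichotomy from \cite{Thompson} applied to the discrete system $\phi^1$ and the continuous observable $\psi$ yields $h(\phi^1,I_\psi(\phi^1))=h(\phi^1)$. Finally, Theorem \ref{teoA} with $t=1$ gives $h(\Phi,Y)=h(\phi^1,Y)$ for every $Y\subseteq X$, hence
$$h(\Phi,I_\varphi(\Phi))=h(\phi^1,I_\varphi(\Phi))=h(\phi^1,I_\psi(\phi^1))=h(\phi^1)=h(\Phi),$$
as required.

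The substantive content lies in the identification $I_\varphi(\Phi)=I_\psi(\phi^1)$; once this is in place, the rest is a clean application of the flow/time-one-map dictionary supplied by Theorem \ref{teoA}. I expect the main obstacle to be notational rather than conceptual: the corollary preceding the theorem yields only the inclusion $I_\varphi(\phi^1)\subseteq I_\varphi(\Phi)$, which by itself is too weak to transport the entropy equality from $\phi^1$ to $\Phi$, and replacing $\varphi$ by its one-unit flow average $\psi$ is precisely the device that upgrades the inclusion to an equality suitable for invoking Thompson's result.
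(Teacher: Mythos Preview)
Your proposal is correct and takes a genuinely different route from the paper. The paper keeps the \emph{same} observable $\varphi$ and works with the inclusion $I_\varphi(\phi^t)\subseteq I_\varphi(\Phi)$ from the preceding corollary: it asserts that $I_\varphi(\phi^t)\neq\emptyset$ for some $t\neq 0$, applies Thompson's theorem to $(\phi^t,\varphi)$ to get $h(\phi^t,I_\varphi(\phi^t))=h(\phi^t)$, and then uses monotonicity of Bowen entropy together with Theorem~\ref{teoA} to push this up to $h(\Phi,I_\varphi(\Phi))=h(\Phi)$. Your approach instead replaces $\varphi$ by the flow-averaged observable $\psi=\int_0^1\varphi\circ\phi^s\,ds$ and obtains the \emph{exact} identification $I_\varphi(\Phi)=I_\psi(\phi^1)$, after which Thompson applied to $(\phi^1,\psi)$ and Theorem~\ref{teoA} finish the job directly. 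The payoff of your device is that nonemptiness transfers automatically: $I_\varphi(\Phi)\neq\emptyset$ immediately gives $I_\psi(\phi^1)\neq\emptyset$, whereas the paper's argument hinges on the bare assertion ``$I_\varphi(\phi^t)$ is not empty for some $t\neq 0$'', which is not justified there and is not an obvious consequence of $I_\varphi(\Phi)\neq\emptyset$ for the \emph{same} $\varphi$. In this sense your argument is the more self-contained of the two; the paper's route is slightly shorter but leans on a step that, as you correctly diagnosed, the inclusion $I_\varphi(\phi^1)\subseteq I_\varphi(\Phi)$ alone does not supply.
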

\begin{proof} By Theorem {\ref{teoD}}, $\phi^1$  has ASP. If $\hat{X}(\Phi,\varphi)$ is non-empty, then {by} Lemma \ref{IF=If}, $\hat{X}(\phi^1,\overline{\varphi})$ is non-emtpy, where $\overline{\varphi}(x)=\int_0^1\varphi(\phi^t(x))dt$. We obtain that
\begin{eqnarray*}
	h(\Phi) & = & h(\phi^1)\\
	& = & h(\phi^1,\hat X(\phi^1,\overline{\varphi}))\qquad\mbox{{\tiny{by Thompson's Theorem}}}\\
	& = & h(\Phi,\hat X(\phi^1,\overline{\varphi}))\qquad\mbox{{\tiny{by Theorem \ref{teoA}.}}}\\
	& = & h(\Phi,\hat X(\Phi,\varphi))\qquad\mbox{{\tiny{by Lemma \ref{IF=If}}}}.\\
\end{eqnarray*}
The statement of {Theorem \ref{ThompsonFlow} }follows.
\end{proof}
\vspace{0.1cm}

\noindent {\em{Acknowledgments. }} We are grateful to the anonymous referee  for his/her valuable comments and corrections that improved the text.

\appendix
\section{Specification} \label{specification}
%\section{Appendix} \label{specification}
Bowen \cite{Bowen(1971)PeriodicPoints} introduced the property of specification to study the entropy of Axiom A diffeomorphisms.
%The property of specification was introduced by Bowen \cite{Bowen(1971)PeriodicPoints} to study the entropy of Axiom A diffeomorphisms. 
In this work, we shall use a slightly modified definition that is {{ most common nowadays}. Systems satisfying this property enjoy important dynamical properties from  a topological and ergodic point of view. We refer to the interested reader \cite{Kwietniak-Lacka-Oprocha(2016)} and the references therein for more on this.

Let $T>0$ be a real number. A {\it $T$-spaced specification} for a flow $\Phi$, is a family $\xi=\{\phi^{[s_i,t_i]}(x_i)\}_{i=1}^n$ with $s_i-t_{i-1}\geq T$ for all $i=2,3,...,n$. A collection  $\xi$ is {$\varepsilon$-\textbf{traced}} by $z \in X$ if
$$
d(\phi^t(x_i),\phi^t(z))<\varepsilon\qquad\mbox{for all }t\in[s_i,t_i],\qquad i=1,2,...,n.
$$

\begin{defi}\label{DefiEspFlux} A flow has the \textbf{specification property } if, for every $\varepsilon>0$, there is $T=T(\varepsilon)>0$ such that every $T$-spaced specification is $\varepsilon$-traced by some point in $X$.
\end{defi}

%\textcolor{red}{ Since that the specification property  only concerns positive orbits of the flow,
%to guaranty that it covers the negative orbits of the flow 
%In your proof of Theorem D, what you prove is in fact that ? has ASP if and onlyif?t hasASPforanyt>0. Idonotseethereasonwhyitalsoholds for t < 0. }
{
\begin{prop}\label{p-valeinversa} If $f:X\rightarrow X$ is a homeomorphism with the specification property,  $f^{-1}$ also has the specification property.
\end{prop}
\begin{proof}  Let $\varepsilon>0$ and $N(\varepsilon)$ as in the definition of specification property. Let $\xi=\{(f^{-1})^{[a_i,b_i]}(x_i)\}_{i=1}^n$ be a  $N(\varepsilon)$-spaced specification for $f^{-1}$. 
Note that the collection $\overline{\xi}=\{f^{[b_n-b_i,b_n-a_i]}(f^{-b_{n+1-i}}x_i)\}_{i=1}^n$ is a $N(\varepsilon)$-spaced specification for $f$, and by the specification property, it is $\varepsilon$-traced by some point $\overline{z}\in X$. Clearly $\xi=\{(f^{-1})^{[a_i,b_i]}(x_i)\}_{i=1}^n$ is $\varepsilon$-traced by the point $z=f^{b_n}(\overline{z})$. This ends the proof.
\end{proof}}

\begin{teo}\label{Phi=>phi1-Esp}
The flow $\Phi=\{\phi^t\}_{t\in\RR}$ has the specification property if, and only if, $\phi^t$ has this property for all $t\in\mathbb{R}\setminus\{0\}$.
\end{teo}
\begin{proof} {By Proposition \ref{p-valeinversa}, it is enough  to prove for $t>0$, 
and there is no loss of generality {to} consider $t=1$.}
Assume that $\Phi$ {has the specification property}, and let us prove that so does $\phi^1$.
For this, let  $\varepsilon>0$ and $T=T(\varepsilon)$ as in Definition \ref{DefiEspFlux} and $\xi=\{\phi^{[a_i,b_i]}(x_i)\}_{i=1}^n$ {be a $T$-spaced specification} for $\phi^1$.
 Note that integer numbers form the intervals $[a_i,b_i]$ above.
%Note that the intervals $[a_i,b_i]$ above are formed by integer numbers.
Complete them to an interval in $\RR$, adding all $t \in \RR,\,\, a_{i}\leq t \leq b_{i}$.
By specification, the collection $\overline{\xi}=\{\phi^{[a_i,b_i]}(x_i)\}_{i=1}^n$ is  $\varepsilon$-traced by some $z\in X$; i. e.,
$$
d(\phi^t(x_i),\phi^t(z))<\varepsilon\qquad\mbox{for all  }t\in[a_i,b_i], i=1,2,...,n.
$$
In particular,
$$
d(\phi^j(x_i),\phi^j(z))<\varepsilon\qquad\mbox{for all }a_i\leq j\leq b_i, i=1,2,...,n.
$$
This shows that $\xi$ is $\varepsilon$-traced by the point $z$.\\

\indent Reversely, suppose that   $\phi^1$ has the specification property. Given $\varepsilon>0$,
by continuity of the flow and compactness of $X$, there is  $\overline{\varepsilon}>0$ such that $d(\phi^tx,\phi^ty)<\varepsilon$ for all $t\in[0,1]$ if $d(x,y)<\overline{\varepsilon}$.
Put  $T=T(\varepsilon)=N(\overline{\varepsilon})+1$ and let  $\overline{\xi}=\{\phi^{[s_i,t_i]}(x_i)\}_{i=1}^n$ {be a $T$-spaced} specification for the flow.
Consider the specification  $\xi=\{\phi^{[a_i,b_i]}(x_i)\}_{i=1}^n$ with $a_i=[s_i]$ and $b_i=[t_i]$, where $[t]$ denotes the integer part of  $t$. Clearly  $\xi$ is a  $N(\overline{\varepsilon})$-spaced specification to the time-1 map $\phi^{1}$. By hypothesis, there is  $z\in X$ such that
$$
d(f^jx_i,f^jz)<\overline{\varepsilon}\qquad\mbox{for all }a_i\leq j\leq b_i,\qquad i=1,2,...,n.
$$
By choice of $\overline{\varepsilon}$, we have
$$
d(\phi^t(x_i),\phi^t(z))<\varepsilon\qquad\mbox{for all }t\in[s_i,t_i],\qquad i=1,2,...,n.
$$
This shows that the flow also has the specification property.
\end{proof}

\bibliographystyle{plain}

\begin{thebibliography}{10}

\bibitem{Barreira2}
{Barreira, L.}
\newblock {\em Dimension and recurrence in hyperbolic dynamics}, volume 272 of
  {\em Progress in Mathematics}.
\newblock Birkh\"auser Verlag, Basel, 2008.

\bibitem{Barreira-Schmeling(2000)}
{Barreira, L. and Schmeling, J.}
\newblock Sets of ``non-typical'' points have full topological entropy and full
  {H}ausdorff dimension.
\newblock {\em Israel J. Math.}, 116:29--70, 2000.

\bibitem{Bowen(1971)PeriodicPoints}
{Bowen, R.}
\newblock Periodic points and measures for {A}xiom {$A$} diffeomorphisms.
\newblock {\em Trans. Amer. Math. Soc.}, 154:377--397, 1971.

\bibitem{Bowen(1973)}
{Bowen, R.}
\newblock Topological entropy for noncompact sets.
\newblock {\em Trans. Amer. Math. Soc.}, 184:125--136, 1973.

\bibitem{Bowen-Walters(1972)}
{Bowen, R. and Walters, P.}
\newblock Expansive one-parameter flows.
\newblock {\em J. Differential Equations}, 12:180--193, 1972.

\bibitem{Burns-Gelfert(2014)}
{Burns, K. and Gelfert, K.}
\newblock Lyapunov spectrum for geodesic flows of rank 1 surfaces.
\newblock {\em Discrete Contin. Dyn. Syst.}, 34(5):1841--1872, 2014.

\bibitem{Buzzi(2005)}
{Buzzi, J.}
\newblock Subshifts of quasi-finite type.
\newblock {\em Invent. Math.}, 159(2):369--406, 2005.

\bibitem{Ercai-Kupper-Lin}
{Chen, E., K\"upper, T. and Lin, S.}
\newblock Topological entropy for divergence points.
\newblock {\em Ergodic Theory Dynam. Systems}, 25(4):1173--1208, 2005.

\bibitem{Colebrook}
{Colebrook, C.}
\newblock The {H}ausdorff dimension of certain sets of nonnormal numbers.
\newblock {\em Michigan Math. J.}, 17:103--116, 1970.

\bibitem{Denker-Grillenberger-Sigmund}
{Denker, M., Grillenberger, Ch. and Sigmund, K.}
\newblock {\em Ergodic theory on compact spaces}.
\newblock Lecture Notes in Mathematics, Vol. 527. Springer-Verlag, Berlin-New
  York, 1976.

\bibitem{Dong-Tian-Yuan(2015)}
{Dong, Y., Tian, X. and Yuan, X.}
\newblock Ergodic properties of systems with asymptotic average shadowing
  property.
\newblock {\em J. Math. Anal. Appl.}, 432(1):53--73, 2015.

\bibitem{Eggleston}
{Eggleston, H.}
\newblock The fractional dimension of a set defined by decimal properties.
\newblock {\em Quart. J. Math., Oxford Ser.}, 20:31--36, 1949.



\bibitem{Fan-Liao-Peyriere}
{Fan, A.-H., Liao, L.-M. and Peyri\`ere, J.}
\newblock Generic points in systems of specification and {B}anach valued
  {B}irkhoff ergodic average.
\newblock {\em Discrete Contin. Dyn. Syst.}, 21(4):1103--1128, 2008.

\bibitem{Furstenberg2}
{Furstenberg, H.}
\newblock Disjointness in ergodic theory, minimal sets, and a problem in
  {D}iophantine approximation.
\newblock {\em Math. Systems Theory}, 1:1--49, 1967.

\bibitem{Gelfert-Kwietniak(2018)}
{Gelfert, K. and Kwietniak, D.}
\newblock On density of ergodic measures and generic points.
\newblock {\em Ergodic Theory Dynam. Systems}, 38(5):1745--1767, 2018.

\bibitem{Hahn-Katznelson(1967)}
{Hahn, F. and Katznelson, Y.}
\newblock On the entropy of uniquely ergodic transformations.
\newblock {\em Trans. Amer. Math. Soc.}, 126:335--360, 1967.

\bibitem{Katok-Hasselblatt}
{Katok, A. and Hasselblatt, B.}
\newblock {\em Introduction to the modern theory of dynamical systems},
  volume~54 of {\em Encyclopedia of Mathematics and its Applications}.
\newblock Cambridge University Press, Cambridge, 1995.
\newblock With a supplementary chapter by Katok and Leonardo Mendoza.

\bibitem{Kulczycki-Oprocha(2011)}
{Kulczycki, M. and Oprocha, P.}
\newblock Properties of dynamical systems with the asymptotic average shadowing
  property.
\newblock {\em Fund. Math.}, 212(1):35--52, 2011.

\bibitem{Kulczycki-Kwietniak-Oprocha(2014)}
{Kulczycki, M., Kwietniak, D. and Oprocha, P.}
\newblock On almost specification and average shadowing properties.
\newblock {\em Fund. Math.}, 224(3):241--278, 2014.

\bibitem{Kwietniak-Lacka-Oprocha(2016)}
{Kwietniak, D., \L\c{a}cka, M. and Oprocha, P.}
\newblock A panorama of specification-like properties and their consequences.
\newblock In {\em In Dynamics and numbers}, pages 155--186. Amer. Math. Soc.,
  Providence, RI, 2016.

\bibitem{Kwietniak-Lacka-Oprocha(2017)}
{Kwietniak, D., \L\c{a}cka, M. and Oprocha, P.}
\newblock Generic points for dynamical systems with average shadowing.
\newblock {\em Monatsh. Math.}, 183(4):625--648, 2017.

\bibitem{Meson-Vericat}
{Mes\'on, A. and Vericat, F.}
\newblock Saturatedness of dynamical systems under the almost specification
  property.
\newblock {\em J. Dyn. Syst. Geom. Theor.}, 14(1):1--15, 2016.

\bibitem{Pesin-Pitskel}
{Pesin, Ya. and Pitskel', B.}
\newblock Topological pressure and the variational principle for noncompact
  sets.
\newblock {\em Functional Anal. Appl.}, 18(4):307--318, 1984.

\bibitem{Pfister-Sullivan(2005)}
{Pfister, C.-E. and Sullivan, W.}
\newblock Large deviations estimates for dynamical systems without the
  specification property. {A}pplications to the {$\beta$}-shifts.
\newblock {\em Nonlinearity}, 18(1):237--261, 2005.

\bibitem{Pfister-Sullivan(2007)}
{Pfister, C.-E. and Sullivan, W.}
\newblock On the topological entropy of saturated sets.
\newblock {\em Ergodic Theory Dynam. Systems}, 27(3):929--956, 2007.

\bibitem{Phelps}
{Phelps, R.}
\newblock {\em Lectures on {C}hoquet's theorem}, volume 1757 of {\em Lecture
  Notes in Mathematics}.
\newblock Springer-Verlag, Berlin, second edition, 2001.

\bibitem{Schmeling(1997)}
{Schmeling, J.}
\newblock Symbolic dynamics for {$\beta$}-shifts and self-normal numbers.
\newblock {\em Ergodic Theory Dynam. Systems}, 17(3):675--694, 1997.

\bibitem{Shen-Zhao}
{Shen, J. and Zhao, Y.}
\newblock Entropy of a flow on noncompact sets.
\newblock {\em Open Syst. Inf. Dyn.}, 19(2):1250015, 10, 2012.

\bibitem{Thompson(2010)}
{Thompson, D.}
\newblock The irregular set for maps with the specification property has full
  topological pressure.
\newblock {\em Dyn. Syst.}, 25(1):25--51, 2010.

\bibitem{Thompson(2012)}
{Thompson, D.}
\newblock Irregular sets, the {$\beta$}-transformation and the almost
  specification property.
\newblock {\em Trans. Amer. Math. Soc.}, 364(10):5395--5414, 2012.

\bibitem{Walters8}
{Walters, P.}
\newblock A variational principle for the pressure of continuous
  transformations.
\newblock {\em Amer. J. Math.}, 97(4):937--971, 1975.

\bibitem{Walters}
{Walters, P.}
\newblock {\em An introduction to ergodic theory}, volume~79 of {\em Graduate
  Texts in Mathematics}.
\newblock Springer-Verlag, New York, 1982.

\bibitem{Wang-Chen-Lin-Wu}
{Wang, Y., Chen, E., Lin, Z. and Wu, T.}
\newblock Bowen topological of generic point for fixed-point free flows.
\newblock arXiv:1901.02135, 2019.

\end{thebibliography}

\noindent Maria Jos\'e Pacifico \,\,$\&$\,\, Diego Alonso Sanhueza\\
Instituto de Matem\'atica,
Universidade Federal do Rio de Janeiro.\\
C. P. 68.530, CEP 21.945-970, Rio de Janeiro, RJ.\\
e-mail: {\em{pacifico@im.ufrj.br \quad
sanhueza.diego.a@gmail.com}}
%%%%%%%%%%%%%%%%%%%%%%%%%%%%%%%%%%%
%%%%%%%%%%%%%%%%%%%%%%%%%%%%%%%%%%
\end{document}